\newcommand{\vv}[1]{\underline{\vphantom{y}{#1}}}
\newcommand{\Cone}[1]{\operatorname{Cone}\!\left({#1}\right)}
\newcommand{\Hm}[2]{\operatorname{H}_{#1}\!\left({#2}\right)}
\newcommand{\betti}[3][R]{\beta^{#1}_{#2}\!\left({#3}\!\right)}
\newcommand{\mcI}{\mathcal{I}}
\newcommand{\mcJ}{\mathcal{J}}
\newcommand{\Tor}[4][R]{\operatorname{Tor}^{#1}_{#2}\!\left(#3,#4\right)}
\newcommand{\ldb}{\mathopen{[\![}} 
\newcommand{\rdb}{\mathclose{]\!]}} 
\newcommand{\lp}{\left(}
\newcommand{\rp}{\right)}
\newcommand{\m}{\mathfrak{m}}
\newcommand{\wt}{\widetilde}
\theoremstyle{plain}
\newtheorem{theorem}{Theorem}
\newtheorem{prop}[theorem]{Proposition}
\newtheorem{cor}[theorem]{Corollary}
\newtheorem{lemma}[theorem]{Lemma}
\newtheorem{maintheorem}{Theorem}
\theoremstyle{definition}
\newtheorem{definition}[theorem]{Definition}
\newtheorem{notation}[theorem]{Notation}
\newtheorem{example}[theorem]{Example}
\newtheorem{rem}[theorem]{Remark}
\numberwithin{theorem}{section}
\numberwithin{equation}{section}
\numberwithin{equation}{theorem}
\begin{document}

\title{Classifying Betti Numbers of Fiber Products}
\author[Ela Celikbas]{Ela Celikbas}
\address{408K Armstrong Hall, School of Mathematical and Data Sciences, West Virginia University, Morgantown, WV 26506, USA.}
\email{ela.celikbas@math.wvu.edu}

\author[Hugh Geller]{Hugh Geller}
\address{411A Armstrong Hall, School of Mathematical and Data Sciences, West Virginia University, Morgantown, WV 26506, USA.}
\email{hugh.geller@mail.wvu.edu}

\author[Tony Se]{Tony Se}
\address{203 Jackson-Davis Hall, Department of Mathematics,
Florida A\&M University, Tallahassee, FL 32307, USA.}
\email{tony.se@famu.edu}

\keywords{Fiber products, Free resolution, Betti numbers, Poincar\'e series}
\subjclass[2020]{13D02, 13D07}

\begin{abstract}
We consider fiber products of complete, local, noetherian algebras over a fixed residue field. Some of these rings cannot be minimally resolved with a free resolution using the recent work of the second author. We develop techniques to modify Geller's approach in order to recover the Betti numbers and Poincar\'e series for these fiber products.

\end{abstract}
\date{\today}
\maketitle

\section{Introduction}
We consider two commutative, local, noetherian rings $(S, \m_S,k)$ and $(T, \m_T,k)$. The {\it fiber product} (or {\it pullback}) $S \times_k T$  of these rings over $k$ is defined as the set $\{(s,t) \in S \times T: \pi_S(s) = \pi_T(t)\}$, where $S \xrightarrow{\pi_S} k \xleftarrow{\pi_T} T$ are the natural surjections. It is well-known that $S \times_k T$ is a local ring with maximal ideal $\m_{S\times_k T}=\m_S\oplus \m_T$ and residue field $k$. 
Fiber products find frequent applications in commutative algebra, category theory, and algebraic geometry. Since the 1980s, extensive research has focused on the ring theoretic and homological properties of fiber products, see  \cites{MR0682707, MR0771818, MR0951203, MR2929675, MR3924433, MR4078358, MR4273207, MR0647683, MR2488551, CCCEGIM2023Arf, MR3988200, MR4232178, MR3691985, MR3862678, MR4064107, MR2580452}.

Numerous findings indicate that the properties of $S \times_k T$ resemble those of the rings $S$ and $T$ \cites{MR2929675,CCCEGIM2023Arf,MR4273207,MR3862678}. 
Additionally, the module category of $S \times_k T$ exhibits a significant relationship with those of $S$ and $T$ \cites{MR0092776,MR0647683}. In \cite{MR3691985}, Nasseh and Sather-Wagstaff explored modules over $S \times_k T$ and presented counterarguments challenging these notions. They examined some consequences for depth formulas (see also \cite{MR2580452}*{Remark 3.1}), rigidity properties of $\text{Tor}$, and various $\text{Ext}$ vanishing results over $S \times_k T$. As a consequence, they also established that the highly coveted Auslander-Reiten conjecture holds for the fiber product $S \times_k T$ if $S \neq k \neq T$.

In 2009, Moore constructed a minimal free resolution over $S \times_k T$ for an $S$-module $M$ using minimal resolutions of $M$ and $k$ over $S$, as well as one of $k$ over $T$. This resolution provided precise information on the multiplicative structure of cohomology over $S \times_k T$ and computed depths of cohomology modules over the fiber product, see \cite{MR2488551}. Schnibben later worked on Golod homomorphisms between specific fiber products and provided a module resolution construction, see \cite{SchnibbenThesis}.

Recently, in \cite{MR4470165}, the second author of this paper gives an explicit construction of free resolutions for fiber products of the form $\frac{R}{\mcI' + \mcJ} \times_{\frac{R}{\mcI + \mcJ}} \frac{R}{\mcI + \mcJ'}$ over the ring $R$, where the ideals (of $R$) $\mcI' \subseteq \mcI$ and $\mcJ' \subseteq \mcJ$ satisfy certain $\operatorname{Tor}$-vanishing properties. Geller proves the construction is minimal if $\Tor{i}{\phi}{k} = 0 = \Tor{i}{\psi}{k}$ for all $i \geq 1$ where $\phi$ and $\psi$ are liftings of the natural surjections $\frac{R}{\mcI'} \to \frac{R}{\mcI}$ and $\frac{R}{\mcJ'} \to \frac{R}{\mcJ}$, respectively. In this paper, we address cases of fiber products that fail to meet the minimality criterion or do not match the form covered in \cite{MR4470165}.

In our work, we set $\vv{x} = x_1, \ldots, x_n$ and $\vv{y} = y_1, \ldots, y_{n'}$ and consider fiber products $F = (k \ldb \vv{x} \rdb / I) \times_k (k \ldb \vv{y} \rdb / I')$ where at least one of the containments of ideals $I \subseteq ( \vv{x} )^2$ and $I' \subseteq ( \vv{y} )^2$ does not hold. In this case, the resolutions from \cite{MR4470165} fail to be minimal, so we reduce $F$ to a new fiber product $\tilde{F}$. We apply the results of \cite{MR4470165} to $\widetilde{F}$ and then use combinatorial techniques to obtain the first of our two main theorems.

\begin{maintheorem}[Theorem \ref{thm:Betti}]\label{maintheoremA}
Take $I$, $I'$, and $F$ as above and set $R = k \ldb \vv{x}, \vv{y} \rdb$. If $p~=~\dim_k\left(\frac{I + \lp \vv{x} \rp^2}{\lp \vv{x} \rp^2}\right)$ and $q~=~\dim_k\left(\frac{I' + \lp \vv{y} \rp^2}{\lp \vv{y} \rp^2}\right)$, then the Betti numbers of $F$ over $R$ can be written in terms of $p$, $q$, $n$, $n'$, the Betti numbers of $R / I$, and the Betti numbers of $R / I'$.
\end{maintheorem}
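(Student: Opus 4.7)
The plan is to reduce to the setting of Geller's construction \cite{MR4470165} by a coordinate change that strips off the linear generators of $I$ and $I'$, apply the construction to the reduced fiber product $\tilde F$, and then transport the Betti numbers back to $R$ via a Koszul change-of-rings formula.

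First, I would choose a basis of $(I + (\vv{x})^2)/(\vv{x})^2$, lift it to $k$-linearly independent linear forms $\vv{z} = z_1,\ldots,z_p \in I$, and extend to a regular system of parameters for $k\ldb \vv{x}\rdb$; after this change of coordinates, $I = (\vv{z}) + \tilde{I}$ with $\tilde{I} \subseteq (\vv{x}'')^2$, where $\vv{x}''$ denotes the remaining $n-p$ variables. The same procedure applied to $I'$ yields $\vv{w} = w_1,\ldots,w_q$ and $\tilde{I}' \subseteq (\vv{y}'')^2$. Setting $\tilde{R} = R/(\vv{z},\vv{w}) = k\ldb \vv{x}'', \vv{y}'' \rdb$ and writing $F = R/(IR + I'R + (\vv{x})(\vv{y}))$, the key observation is that $(\vv{x})(\vv{y}) \equiv (\vv{x}'')(\vv{y}'') \pmod{(\vv{z}, \vv{w})}$, which yields a ring isomorphism
\[
F \;\cong\; \tilde{R}/\lp \tilde{I} + \tilde{I}' + (\vv{x}'')(\vv{y}'')\rp \;=:\; \tilde{F},
\]
realizing $F$ as the fiber product $(k\ldb \vv{x}''\rdb/\tilde{I}) \times_k (k\ldb \vv{y}''\rdb/\tilde{I}')$, which now satisfies the containments required by \cite{MR4470165}.

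Second, since $\vv{z}, \vv{w}$ is an $R$-regular sequence of length $p+q$ and $\tilde{R} = R/(\vv{z},\vv{w})$, the minimal $R$-free resolution of any $\tilde{R}$-module is obtained by tensoring its minimal $\tilde{R}$-free resolution with the Koszul complex on $\vv{z}, \vv{w}$; applied to $F \cong \tilde{F}$, this gives
\[
\betti[R]{i}{F} \;=\; \sum_{j+k=i} \binom{p+q}{k}\, \betti[\tilde{R}]{j}{\tilde{F}}.
\]
Then I would apply Geller's theorem to $\tilde{F}$, written as $\tilde{R}/(\mcI' + \mcJ) \times_{\tilde{R}/(\mcI + \mcJ)} \tilde{R}/(\mcI + \mcJ')$ with $\mcI = (\vv{y}'')$, $\mcI' = \tilde{I}'$, $\mcJ = (\vv{x}'')$, and $\mcJ' = \tilde{I}$. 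The containments $\mcI' \subseteq \mcI$ and $\mcJ' \subseteq \mcJ$ hold since $\tilde{I} \subseteq (\vv{x}'')$ and $\tilde{I}' \subseteq (\vv{y}'')$, and the required $\operatorname{Tor}$-vanishing on liftings $\phi, \psi$ should follow from the clean separation of variables, since $\tilde{R}/(\vv{y}'') \cong k\ldb \vv{x}''\rdb$ and $\tilde{R}/(\vv{x}'') \cong k\ldb \vv{y}''\rdb$ are power-series rings in disjoint variable sets. This expresses $\betti[\tilde{R}]{j}{\tilde{F}}$ in terms of the Betti numbers of $k\ldb \vv{x}''\rdb/\tilde{I}$ and $k\ldb \vv{y}''\rdb/\tilde{I}'$, together with $n-p$ and $n'-q$.

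Finally, I would convert back to the original data: flat base change gives $\betti[R]{i}{R/I} = \betti[k\ldb\vv{x}\rdb]{i}{k\ldb\vv{x}\rdb/I}$, and a second Koszul factor for the regular sequence $\vv{z}$ in $k\ldb \vv{x}\rdb$ relates this to the Betti numbers of $k\ldb \vv{x}''\rdb/\tilde{I}$ (and analogously for $I'$). Substituting into the chain of formulas above expresses $\betti[R]{i}{F}$ as an explicit polynomial combination of $p, q, n, n'$, and the Betti numbers of $R/I$ and $R/I'$, as desired. The main obstacle will be the careful bookkeeping inside the coordinate change --- guaranteeing that after removing $\vv{z}$ one genuinely lands inside $(\vv{x}'')^2$ rather than $(\vv{x}'')^2 + (\vv{z})$, which requires a division/Weierstrass-type argument in the complete local ring --- together with verifying the $\operatorname{Tor}$-vanishing hypothesis of \cite{MR4470165} in the reduced setting.
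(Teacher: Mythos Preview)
Your proposal is correct and follows essentially the same strategy as the paper: isolate the linear generators via a change of coordinates (the paper's Lemmas~2.4--2.6 and Remark~2.7 carry out exactly the Weierstrass-type reduction you flag as the main obstacle), reduce to a smaller fiber product $\tilde F$ satisfying the containments needed for \cite{MR4470165}, and then recover $\betti[R]{i}{F}$ from $\betti{j}{\tilde F}$ by a Koszul convolution with $\binom{p+q}{\,\cdot\,}$. The only cosmetic difference is that the paper keeps $\tilde F$ as an $R$-module and uses the tensor decomposition $F\cong \tilde F\otimes_R R/(x_1,\dots,x_p,y_1,\dots,y_q)$ together with regularity of that sequence on $\tilde F$, whereas you pass to the quotient ring $\tilde R=R/(\vv z,\vv w)$ and invoke change-of-rings; these are equivalent via flat base change, and the paper additionally pushes through the explicit binomial identities (Vandermonde, Pascal) to obtain the closed formula, which your sketch stops short of but which the stated Theorem does not require.
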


Our other main theorem uses similar strategies to establish a functional equation for the Poincar\'e series of $F$ over $R$.

\begin{maintheorem}[Theorem \ref{thm:Poin}]\label{maintheoremB}
Assume the set-up of Theorem \ref{maintheoremA}. The Poincar\'e series of $F$ over $R$ satisfies \[\frac{ P_F^R(t) - \lp 1 + t \rp^{n'} P_{R/I}^R(t) - \lp 1 + t \rp^{n} P_{R/I'}^R(t) + \lp 1 + t \rp^{n + n'} }{\lp \lp 1 + t \rp^{n \vphantom{n'}} - \lp 1 + t \rp^p \rp \lp \lp1 + t \rp^{n'} - \lp 1 + t \rp^q \rp} = \frac{t + 1}{t}.\]
\end{maintheorem}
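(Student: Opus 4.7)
The plan is to deduce the functional equation by reducing $F$ to a minimal fiber product $\widetilde{F}$ to which the explicit free resolution of \cite{MR4470165} applies, and then to assemble the Poincar\'e series via elementary algebra. After a suitable change of variables we may assume that $x_1,\ldots,x_p$ span the linear part of $I$ and $y_1,\ldots,y_q$ span the linear part of $I'$, so that $I = (x_1,\ldots,x_p) + \widetilde{I}$ with $\widetilde{I} \subseteq (x_{p+1},\ldots,x_n)^2$, and symmetrically $I' = (y_1,\ldots,y_q) + \widetilde{I}'$ with $\widetilde{I}' \subseteq (y_{q+1},\ldots,y_{n'})^2$. Set $\widetilde{R} = k\ldb x_{p+1},\ldots,x_n,y_{q+1},\ldots,y_{n'}\rdb$ and $\widetilde{F} = \widetilde{R}/\widetilde{I}\widetilde{R} \times_k \widetilde{R}/\widetilde{I}'\widetilde{R}$; then $R \cong \widetilde{R}\ldb x_1,\ldots,x_p,y_1,\ldots,y_q\rdb$, both rings are regular, the sequence $x_1,\ldots,x_p,y_1,\ldots,y_q$ vanishes in $F$, and $F = \widetilde{F}$ as $\widetilde{R}$-modules.

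\textbf{Step 1 (Reduction).} A K\"unneth computation comparing the minimal $\widetilde{R}$-resolution of $\widetilde{F}$ with the Koszul resolution of $k$ over $k\ldb x_1,\ldots,x_p,y_1,\ldots,y_q\rdb$ gives $P_F^R(t) = (1+t)^{p+q}\,P_{\widetilde{F}}^{\widetilde{R}}(t)$. An analogous argument, using $R/I \cong (A/I)\widehat{\otimes}_k B$ together with the $k$-flatness of $B$ (so that a minimal $A$-resolution of $A/I$ tensored with $B$ stays a minimal $R$-resolution of $R/I$), yields $P_{R/I}^R(t) = P_{A/I}^A(t) = (1+t)^p\,P_{\widetilde{A}/\widetilde{I}}^{\widetilde{A}}(t)$, where $\widetilde{A} = k\ldb x_{p+1},\ldots,x_n\rdb$; symmetrically, $P_{R/I'}^R(t) = (1+t)^q\,P_{\widetilde{B}/\widetilde{I}'}^{\widetilde{B}}(t)$.

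\textbf{Step 2 (Base case).} Under the containments $\widetilde{I} \subseteq (x_{p+1},\ldots,x_n)^2$ and $\widetilde{I}' \subseteq (y_{q+1},\ldots,y_{n'})^2$, Geller's construction in \cite{MR4470165} supplies an explicit minimal free resolution of $\widetilde{F}$ over $\widetilde{R}$. Summing the resulting Betti numbers (i.e., the $p=q=0$ instance of Theorem~\ref{maintheoremA} applied to $\widetilde{F}$ over $\widetilde{R}$) produces
\[
P_{\widetilde{F}}^{\widetilde{R}}(t) = (1+t)^{n'-q}P_{\widetilde{A}/\widetilde{I}}^{\widetilde{A}}(t) + (1+t)^{n-p}P_{\widetilde{B}/\widetilde{I}'}^{\widetilde{B}}(t) - (1+t)^{(n-p)+(n'-q)} + \tfrac{t+1}{t}\bigl((1+t)^{n-p}-1\bigr)\bigl((1+t)^{n'-q}-1\bigr).
\]

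\textbf{Step 3 (Assembly).} Multiplying the display in Step 2 by $(1+t)^{p+q}$ and invoking Step 1, the first term becomes $(1+t)^{n'-q+p+q}P_{\widetilde{A}/\widetilde{I}}^{\widetilde{A}}(t) = (1+t)^{n'}P_{R/I}^R(t)$, the second becomes $(1+t)^{n}P_{R/I'}^R(t)$, the third becomes $(1+t)^{n+n'}$, and the final term becomes $\tfrac{t+1}{t}\bigl((1+t)^{n}-(1+t)^{p}\bigr)\bigl((1+t)^{n'}-(1+t)^{q}\bigr)$ using the factorization $(1+t)^p\bigl((1+t)^{n-p}-1\bigr) = (1+t)^{n}-(1+t)^{p}$ and its analogue. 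Rearranging recovers the target identity. The main obstacle is Step 2: extracting the claimed closed-form Poincar\'e series from the resolution of \cite{MR4470165} requires tracking how its Koszul-like pieces contribute across homological degrees and organizing them into exactly the four summands above; once that is established, Steps 1 and 3 amount to routine manipulations justified by the regularity of $R$ and $\widetilde{R}$ and by elementary binomial identities.
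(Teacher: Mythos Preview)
Your proposal is correct and follows essentially the same route as the paper: reduce to the auxiliary fiber product $\widetilde{F}$ satisfying the minimality hypotheses of \cite{MR4470165}, invoke the Poincar\'e formula for $\widetilde{F}$, and then multiply through by $(1+t)^{p+q}$ using Corollary~\ref{cor:poin} to pass from $J,J'$ back to $I,I'$. Your ``main obstacle'' in Step~2 is in fact no obstacle at all---the displayed identity is exactly \cite{MR4470165}*{Corollary~5.4}, which the paper cites directly rather than re-deriving from the Betti numbers.
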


In Section \ref{background}, we introduce the construction from \cite{MR4470165} and provide an example demonstrating why it is not minimal for the fiber product $F$. This motivates the definition of $p$ and $q$ as given in Theorem \ref{maintheoremA} that measures the degree to which the containment $I \subseteq ( \vv{x} )^2$ (and $I' \subseteq ( \vv{y} )^2$) fails. In Section~\ref{Results}, we use $p$ and $q$ to reduce $F$ to another fiber product $\widetilde{F}$. We then apply combinatorial techniques to results on $\widetilde{F}$ to obtain Theorems \ref{maintheoremA} and \ref{maintheoremB}.

\section{Background and Preliminary Results}\label{background}

Throughout the paper, let $k$ be a field and $R = k \ldb \vv{x}, \vv{y} \rdb$, where $\vv{x} = x_1, \ldots, x_n$ and $\vv{y} = y_1, \ldots, y_{n'}$. We consider two ideals $I \subseteq k \ldb \vv{x} \rdb$ and $I' \subseteq k \ldb \vv{y} \rdb$ and view them in $R$. To shorten notation, we write $I$ and $I'$ in place of $IR$ and $I'R$.

\begin{rem}[\cite{MR4470165}*{Proposition 2.1}]  \label{prop:defideal}
The defining ideal for the fiber product $\lp k \ldb \vv{x} \rdb / I \rp \times_k \lp k \ldb \vv{y} \rdb /I' \rp$ is given by $I + I' + (\vv{x}\vv{y})$ where $(\vv{x}\vv{y})$ is the ideal generated by the set $\{x_i y_j : 1 \leq i \leq n, 1 \leq j \leq n' \}$.
\end{rem}

\begin{example} \label{example:intro}
We consider the two ideals $I=(x_1 + x_1^3 + x_2^2) \subset k \ldb x_1,x_2 \rdb$ and $I'=(y_1 + y_1^3 + y_2^2) \subset k \ldb y_1, y_2 \rdb$. Then $k\ldb \vv{x} \rdb /I  \times_k k \ldb \vv{y} \rdb/ I'$ has a defining ideal $$(x_1 + x_1^3 + x_2^2, y_1 + y_1^3 + y_2^2, x_1y_1, x_1y_2, x_2y_1, x_2y_2).$$ However, we see that
\begin{align*}
\lp 1 + x_1^2 \rp x_1 y_1 &= y_1 \lp x_1 + x_1^3 + x_2^2 \rp - x_2 \lp x_2 y_1 \rp.
\end{align*}
Since $1 + x_1^2 \in k \ldb \vv{x}, \vv{y} \rdb^{\times}$, the above shows that $x_1 y_1$ can be written in terms of the other generators. That is, the list coming from Remark~\ref{prop:defideal} is not necessarily minimal. Similar computations hold for $x_1 y_2$ and $x_2 y_1$.
\end{example}

We now review the construction from \cite{MR4470165} that we will use in this paper. 

\begin{definition}
Let $\mathcal X$ and $\mathcal Y$ be complexes of free $R$-modules. The complex $\mathcal X \ast_R \mathcal Y$, previously called the \emph{star product} in \cites{MR4420538, MR4470165}, is defined by
$$(\mathcal X\ast_R \mathcal Y)_n=\begin{cases} (\mathcal X_{\geq 1} \otimes_R \mathcal Y_{\geq 1})_{n+1} & n\geq 1 \\ 
\mathcal X_{0}\otimes_R \mathcal Y_{0} & n=0 \\
0 & n<0\end{cases} \; \text{and}\; \partial_n^{\mathcal X\ast \mathcal Y}=\begin{cases} \partial_{n+1}^{\mathcal X_{\geq 1} \otimes_R \mathcal Y_{\geq 1}} & n\geq 2 \\ 
\partial_1^{\mathcal X} \otimes \partial_1^{\mathcal Y} & n=1 \\
0 & n\leq 0\end{cases}.$$
In other words, we get $\mathcal X \ast_R \mathcal Y$ by truncating $\mathcal X$ and $\mathcal Y$, tensoring the truncations, and then shifting and augmenting the tensor product.
\end{definition}

From now on, let $\mathcal X$, $\mathcal Y$, $\mathcal S$, $\mathcal T$ resolve the quotients $R/(\vv{x})$, $R/(\vv{y})$, $R/I$, and $R/I'$, respectively.
Let $\phi \colon \mathcal S \rightarrow \mathcal X$ and $\psi\colon \mathcal S\rightarrow \mathcal X$ be two chain maps lifting the natural surjections $R/I \rightarrow R/(\vv{x})$ and $R/I' \rightarrow R/(\vv{y})$, respectively. The maps $\phi$ and $\psi$ are then lifted to $\Phi:\Sigma^{-1} (\mathcal S_{\geq 1} \otimes_R \mathcal Y) \rightarrow \mathcal X\ast_R \mathcal Y$ and $\Psi: \Sigma^{-1} (\mathcal X \otimes_R \mathcal T_{\geq 1}) \rightarrow \mathcal X\ast_R \mathcal Y$ using \cite{MR4470165}*{Construction 4.5} and shown to be chain maps \cite{MR4470165}*{Lemma 4.6}. These maps are used in the following lemma.

\begin{lemma}\label{lem:RedCase}  Let $I \subseteq ( \vv{x} )^2 \subset k \ldb \vv{x} \rdb$ and $I' \subseteq ( \vv{y})^2 \subset k \ldb \vv{y} \rdb$. Let 
$$\Omega=\begin{pmatrix} \Phi & \Psi \end{pmatrix} \colon \Sigma^{-1} (\mathcal S_{\geq 1} \otimes_R \mathcal Y) \oplus \Sigma^{-1} (\mathcal X \otimes_R \mathcal T_{\geq 1}) \rightarrow \mathcal X \ast_R \mathcal Y.$$
Then $\Cone \Omega$ is a minimal free resolution of $\frac{k \ldb \vv{x} \rdb}{I}~\times_k~\frac{k \ldb \vv{y} \rdb}{I'}$ over~$R =  k \ldb \vv{x}, \vv{y} \rdb$.
\end{lemma}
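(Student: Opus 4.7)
The plan is to view the lemma as a direct specialization of the resolution construction of \cite{MR4470165}, verifying both the Tor-vanishing required for $\Cone \Omega$ to be a free resolution of $F$ and the additional Tor-vanishing on the chain maps $\phi, \psi$ needed for minimality.

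First, I would reframe $F$ as an instance of the fiber products handled in \cite{MR4470165}. Setting $\mcI = (\vv{x})$, $\mcJ = (\vv{y})$, $\mcI' = I$, and $\mcJ' = I'$, one has $k\ldb\vv{x}\rdb/I = R/(\mcI' + \mcJ)$, $k\ldb\vv{y}\rdb/I' = R/(\mcI + \mcJ')$, and $k = R/(\mcI + \mcJ)$; the required containments $\mcI' \subseteq \mcI$ and $\mcJ' \subseteq \mcJ$ are immediate from $I \subseteq (\vv{x})^2$ and $I' \subseteq (\vv{y})^2$. Since $R = k\ldb\vv{x}\rdb\ldb\vv{y}\rdb$, the tuple $\vv{y}$ is regular on both $R/I = (k\ldb\vv{x}\rdb/I)\ldb\vv{y}\rdb$ and on $R/(\vv{x}) = k\ldb\vv{y}\rdb$, so a Koszul complex computation yields
\[
\Tor{i}{R/(\vv{y})}{R/I} \;=\; 0 \;=\; \Tor{i}{R/(\vv{y})}{R/(\vv{x})} \qquad (i \geq 1),
\]
and symmetrically with $\vv{x}$ in place of $\vv{y}$. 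This supplies the Tor-vanishing hypotheses needed by \cite{MR4470165}, so $\Cone\Omega$ is a free resolution of $F$.

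For minimality, I would verify the criterion $\Tor{i}{\phi}{k} = 0 = \Tor{i}{\psi}{k}$ for $i \geq 1$ recorded in the introduction, equivalently that $\phi$ and $\psi$ admit lifts whose matrices all have entries in $\m$. Choose $\mathcal X$ to be the Koszul complex on $\vv{x}$ and $\mathcal S$ a minimal free resolution of $R/I$. Each minimal generator of $I$ lies in $(\vv{x})^2 \subseteq \m(\vv{x})$, so it can be written as $\sum_i a_{ij} x_i$ with $a_{ij} \in \m$; this yields a lift $\phi_1$ whose entries lie in $\m$. Inductively, once $\phi_{i-1}$ has entries in $\m$ for $i \geq 2$, the matrix $\phi_{i-1}\partial_i^{\mathcal S}$ has entries in $\m^2$, and the linearity of the Koszul differential $\partial_i^{\mathcal X}$ in the $x_j$'s, combined with a standard local-ring computation, produces a preimage $\phi_i$ with entries in $\m$. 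The same reasoning handles $\psi$, so minimality follows from \cite{MR4470165}. The main technical obstacle is precisely this inductive step: finding a preimage in $\m \mathcal X_i$ for an element of $\IM(\partial_i^{\mathcal X}) \cap \m^2 \mathcal X_{i-1}$; this reduces to the containment $\m^2 \cap (\vv{x}) \subseteq \m(\vv{x})$ in $R$ together with the fact that the Koszul differentials on $\vv{x}$ are linear in $\vv{x}$.
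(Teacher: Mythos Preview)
Your verification that $\Cone\Omega$ is a free resolution matches the paper's (the paper simply cites \cite{MR4470165}*{Example 4.2} for the same Tor-vanishing you spell out via regular sequences). For minimality, however, you take a different route. The paper factors $R/I \to R/(\vv{x})$ through $R/(\vv{x})^2$ and invokes \cite{MR3754407}*{Remark 2.12} to conclude that $\Tor{i}{R/(\vv{x})^2}{k} \to \Tor{i}{R/(\vv{x})}{k}$ vanishes for $i \geq 1$; functoriality then forces $\Tor{i}{\phi}{k} = 0$. You instead build the lift $\phi$ degree by degree, keeping all entries in $\m$. Your strategy is sound and has the virtue of being self-contained, but the inductive step is under-justified: knowing that the entries of $\phi_{i-1}\partial_i^{\mathcal S}$ lie in $\m^2 \cap (\vv{x}) = \m(\vv{x})$ does not by itself produce a preimage in $\m\mathcal{X}_i$, since the coefficients in such a rewriting must be \emph{compatible} across the various entries to assemble into a single element $w$ of $\mathcal{X}_i$. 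What actually closes the gap is that the associated graded complex $\operatorname{gr}_{\m}\mathcal{X} = K^{k[\vv{x},\vv{y}]}(\vv{x})$ is acyclic in positive degrees; a successive-approximation argument (using completeness of $R$) then shows that any cycle in $\m^{j}\mathcal{X}_{i-1}$ bounds an element of $\m^{j-1}\mathcal{X}_i$, and your induction goes through with $j=2$. The paper's factoring argument sidesteps this technical point by reducing everything to a single cited fact about $(\vv{x})^2 \hookrightarrow (\vv{x})$, trading self-containment for brevity.
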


\begin{proof}
By \cite{MR4470165}*{Example 4.2}, the pairs $\{I,(\vv{y})\}$, $\{( \vv{x} ), (\vv{y})\}$, and $\{(\vv{x}), I' \}$ satisfy the necessary $\operatorname{Tor}$-vanishing for $\Cone \Omega$ to produce a free resolution of the fiber product $\frac{k \ldb \vv{x} \rdb}{I}\!\times_k\!\frac{k \ldb \vv{y} \rdb}{I'}$ over $R$.

Observe that $\Tor{i}{\phi}{k} : \Tor{i}{\tfrac{R}{I}}{k} \to \Tor{i}{\tfrac{R}{( \vv{x} )}}{k}$ factors through $$\Tor{i}{\tfrac{R}{(\vv{x})^2}}{k} \to \Tor{i}{\frac{R}{(\vv{x})}}{k}$$ for $i \geq 0$. By \cite{MR3754407}*{Remark 2.12}, we have \[\Tor{i}{\tfrac{R}{(\vv{x})^2}}{k} \cong \Tor{i - 1}{(\vv{x} )^2}{k} \to \Tor{i - 1}{(\vv{x} )}{k} \cong \Tor{i}{\tfrac{R}{(\vv{x})}}{k}\] is the zero map for all $i \geq 1$. Thus, we conclude $\Tor{i}{\phi}{k} = 0$ is the zero map for all $i \geq 1$. The same argument holds for $\Tor{i}{\frac{R}{I'}}{k} \to \Tor{i}{\frac{R}{( \vv{y} )}}{k}$. Therefore the minimality conditions of \cite{MR4470165}*{Theorem 5.2} are satisfied.
\end{proof}

Lemma \ref{lem:RedCase} gives a sufficient condition (i.e. $I \subseteq (\vv{x})^2$ and $I' \subseteq (\vv{y})^2$) for the construction in \cite{MR4470165} to be minimal. The following example begins to address the case when we fail to meet the conditions of Lemma \ref{lem:RedCase}.

\begin{example}\label{example:original}
Consider $(x_1 + x_1^3 + x_2^2) \subset k \ldb x_1,x_2 \rdb$ and $(y_1 + y_1^3 + y_2^2) \subset k \ldb y_1, y_2 \rdb$. The resolution from \cite{MR4470165}*{Theorem 4.8} has the form
\[
\xymatrix{
0 \ar[r] & R^3 \ar[r] & R^8 \ar[r] & R^6 \ar[r] & R \ar[r] & 0
}
\]
but a computation in Macaulay2 gives Betti numbers $(1, 3, 3, 1)$. To see where we lose minimality, we consider the chain map obtained by lifting the natural surjection $R/(x_1 + x_1^3 + x_2^2) \to R/(x_1,x_2)$.
\[
\xymatrix{
\mathcal{S} := \ar[d]_-{\phi} & & 0 \ar[r] & R \ar[rr]^-{x_1 + x_1^3 + x_2^2} \ar[d]^-{\left(\!\begin{smallmatrix} x_1^2 + 1 \\ x_2 \end{smallmatrix}\!\right)} && R \ar[r] \ar[d]^-{1} & 0 \\
K^R(x_1,x_2) := & 0 \ar[r] & R \ar[r]_-{\left(\!\begin{smallmatrix} -x_2 \\ x_1 \end{smallmatrix}\!\right)} & R^2 \ar[rr]_-{\left(\!\begin{smallmatrix} x_1 & x_2 \end{smallmatrix}\!\right)} && R \ar[r] & 0
}
\]
The presence of a unit in the entries of $\phi_1$ yields $\Tor{1}{\phi}{k} \neq 0$ and thus the construction fails the minimality conditions of \cite{MR4470165}*{Theorem 5.2}.
\end{example}

\begin{lemma}\label{lem:tonylem}
Let $I=(g_1,g_2,\dots,g_m)\subseteq (\vv{x})$ and $p~=~\dim_k\left(\frac{I + \lp \vv{x} \rp^2}{\lp \vv{x} \rp^2}\right)$. Then $p \leq \min\{m,n\}$ and we can rewrite the $g_i$ such that:
\begin{enumerate}[(i)]
  \item the coefficient of $x_i$ is 1 in $g_i$ and 0 in $g_j$
  for all $i,j=1,2,\dots,p$ with $i \neq j$, and \label{item:kronecker}
  \item $g_{p+1},g_{p+2},\dots,g_m \in (\vv{x})^2$. \label{item:msquared}
\end{enumerate}
Moreover, $I = \lp \vv{x} \rp$ if and only if $p = n$.
\end{lemma}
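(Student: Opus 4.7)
The plan is to linearize the problem by reading everything in the cotangent space $(\vv{x})/(\vv{x})^2$, perform Gaussian elimination there, and then lift the reduction back to the generating set of $I$. Since each $g_i$ lies in $(\vv{x})$, I would first write $g_i = \sum_{j=1}^n a_{ij} x_j + h_i$ for unique scalars $a_{ij} \in k$ and some $h_i \in (\vv{x})^2$. The quotient $(\vv{x})/(\vv{x})^2$ is an $n$-dimensional $k$-vector space with basis $\{\bar{x}_1, \dots, \bar{x}_n\}$, and the residues $\bar{g}_i = \sum_j a_{ij} \bar{x}_j$ span the $p$-dimensional subspace $(I + (\vv{x})^2)/(\vv{x})^2$. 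Hence $p$ equals the rank of the matrix $A = (a_{ij}) \in k^{m \times n}$, which immediately yields $p \leq \min\{m,n\}$.

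Next I would put $A$ in reduced row echelon form. Invertible row operations over $k$ correspond to replacing the $g_i$ by invertible $k$-linear combinations of themselves, which does not change the ideal $I$. Row reduction produces a matrix whose first $p$ rows have leading $1$'s in columns $j_1 < \cdots < j_p$, whose pivot columns contain zeros outside each pivot, and whose last $m-p$ rows vanish. A permutation of the variables sending $x_{j_i} \mapsto x_i$ (an automorphism of $k\ldb \vv{x}\rdb$) relocates the pivots to the first $p$ columns. After these manipulations, the new generators $g_1,\dots,g_p$ satisfy~(\ref{item:kronecker}), while $g_{p+1}, \dots, g_m$ have vanishing linear part and therefore lie in $(\vv{x})^2$, giving~(\ref{item:msquared}).

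For the final equivalence, the implication $I = (\vv{x}) \Rightarrow p = n$ is immediate from the definition of $p$. Conversely, if $p = n$ then $I + (\vv{x})^2 = (\vv{x})$, so the finitely generated module $M = (\vv{x})/I$ satisfies $M = (\vv{x}) \cdot M$, and Nakayama's lemma applied over the local ring (whose maximal ideal contains $(\vv{x})$) forces $M = 0$, i.e.\ $I = (\vv{x})$.

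The main obstacle is really just bookkeeping: I must check that each reduction step genuinely preserves $I$. Sticking to invertible row operations over $k$ (legitimate since they give another generating set of the same ideal) together with permutations of the $x_i$'s (ring automorphisms that fix $(\vv{x})$ and $(\vv{x})^2$) suffices; this is preferable to arbitrary column operations, which would scale-mix variables and obscure the monomial basis of the cotangent space on which the pivot condition~(\ref{item:kronecker}) is expressed.
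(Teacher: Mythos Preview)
Your proposal is correct and follows essentially the same route as the paper: write each $g_i$ as a $k$-linear part plus a remainder in $(\vv{x})^2$, row-reduce the resulting coefficient matrix (the paper calls this ``Gaussian elimination on the $m$ equations''), re-index the variables so the pivot columns come first, and finish the equivalence with Nakayama's lemma. The only cosmetic difference is that you identify $p=\operatorname{rank}(A)$ at the outset, whereas the paper first row-reduces to obtain $q$ nonzero rows and then checks $q=p$ by computing $\dim_k\bigl((I+(\vv{x})^2)/(\vv{x})^2\bigr)$.
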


\begin{proof}
For each $i=1,2,\dots,m$, let $g_i=a_{i1}x_1+a_{i2}x_2+\dots+a_{in}x_n+h_i$,
where each $a_{ij} \in k$ and $h_i \in (\vv{x})^2$. Performing Gaussian elimination on the $m$ equations produces a new set of generators for $I$ given by \[\wt{g_i} = \begin{cases} x_{j_i} + \wt{h_i} + \sum_{j > j_i} \wt{a_{ij}} x_j  & 1 \leq i \leq q \\ \wt{h}_i & q < i \leq m \end{cases}\] where each $\wt{a_{ij}} \in k$ and $\wt{h_i} \in \lp \vv{x} \rp^2$. Consequently, $\wt{g_i} \in \lp \vv{x} \rp^2$ for $q < i \leq m$, so \[\frac{I + \lp \vv{x} \rp^2}{\lp \vv{x} \rp^2} = \frac{\lp \wt{g_1}, \ldots, \wt{g_q}, \wt{g}_{q + 1}, \ldots, \wt{g_m} \rp + \lp \vv{x} \rp^2}{\lp \vv{x} \rp^2} = \frac{\lp \wt{g_1}, \ldots, \wt{g_q} \rp + \lp \vv{x} \rp^2}{\lp \vv{x} \rp^2}.\] The dimension over $k$ on the left-hand side is $p$ while the right-hand side is dimension $q$, so $p = q$. By re-indexing the variables so that $x_1,\dots,x_p$ are those with a leading 1, we obtain \eqref{item:kronecker} and \eqref{item:msquared} with $p \leq \min\{m,n\}$.

Finally, if $I = \lp \vv{x} \rp$, then we immediately have $p=n$. On the other hand, if $p = n$, then $\lp \vv{x} \rp =I+\lp \vv{x} \rp^2$. By Nakayama's Lemma, $I=\lp \vv{x}\rp$.
\end{proof}

We now use Lemma \ref{lem:tonylem} to demonstrate stricter conditions that may be imposed on the choices of $g_i$. In particular, we use the next few results to show that without loss of generality, we may assume $g_i = x_i$ for $1 \leq i \leq p$.

\begin{lemma} \label{lem:Tony}
  Let $I$ and $p$ be as in Lemma~\ref{lem:tonylem}, with $I$ satisfying \eqref{item:kronecker} and \eqref{item:msquared}. Then $g_1,g_2,\dots,g_p,x_{p+1}, \ldots, x_n$ is a regular sequence over $k \ldb \vv{x} \rdb$.
  
\end{lemma}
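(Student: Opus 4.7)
The plan is to recognize that $k\ldb \vv{x} \rdb$ is a regular (hence Cohen-Macaulay) local ring of dimension $n$, and that the sequence $g_1,\ldots,g_p,x_{p+1},\ldots,x_n$ has length exactly $n$. It therefore suffices to show this sequence forms a system of parameters, because in a Cohen-Macaulay local ring every system of parameters is automatically a regular sequence. Equivalently, I need to verify that the ideal $J := (g_1, \ldots, g_p, x_{p+1}, \ldots, x_n)$ is $(\vv{x})$-primary, so that the quotient $A := k\ldb \vv{x} \rdb/J$ is Artinian.

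To establish this, I will use condition \eqref{item:kronecker} of Lemma~\ref{lem:tonylem}, which allows me to write $g_i = x_i + \ell_i + h_i$ for each $1 \leq i \leq p$, where $\ell_i$ is a $k$-linear combination of $x_{p+1},\ldots,x_n$ and $h_i \in (\vv{x})^2$. Passing to $A$, the elements $x_{p+1},\ldots,x_n$ vanish, so $\ell_i$ dies and the image of $g_i$ becomes $x_i + \bar{h_i}$, where $\bar{h_i}$ denotes the image of $h_i$ in $k\ldb x_1,\ldots,x_p\rdb \subseteq A$. Since $h_i \in (\vv{x})^2$, we have $\bar{h_i} \in (x_1,\ldots,x_p)^2$. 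The relations $g_i \equiv 0$ in $A$ then force each generator $x_i$ of the maximal ideal $\m_A = (x_1,\ldots,x_p)A$ to lie in $\m_A^2$, so $\m_A = \m_A^2$. Nakayama's lemma now yields $\m_A = 0$; hence $A \cong k$ is Artinian, $J$ is $(\vv{x})$-primary, and the length-$n$ sequence is a system of parameters in an $n$-dimensional regular local ring, as required.

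The main obstacle here is minimal: the only subtlety is carefully unpacking condition \eqref{item:kronecker} so that the leading $x_i$ term is extracted cleanly before collapsing modulo $x_{p+1},\ldots,x_n$. Once this setup is in place, a single application of Nakayama's lemma closes the argument, and the Cohen-Macaulay property of the regular ambient ring does the rest.
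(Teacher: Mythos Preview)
Your proof is correct and follows essentially the same route as the paper's. Both arguments set $L=(g_1,\ldots,g_p,x_{p+1},\ldots,x_n)$, use condition~\eqref{item:kronecker} together with Nakayama's lemma to deduce $L=(\vv{x})$ (your computation $\m_A=\m_A^2\Rightarrow A\cong k$ is just the quotient-side version of the paper's $L+(\vv{x})^2=(\vv{x})\Rightarrow L=(\vv{x})$), and then invoke the Cohen--Macaulay property of the regular local ring $k\ldb\vv{x}\rdb$ to conclude that this length-$n$ generating set of the maximal ideal is a regular sequence; the paper cites \cite{MR1011461}*{Theorem~17.4} for that last step, which is exactly the statement you use.
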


\begin{proof}
  Let $L=(g_1,g_2,\dots,g_p,x_{p+1},x_{p+2},\dots,x_n)$. For each $i=1,2,\dots,p$, let
  $\wt{g_i}=g_i(x_1,x_2,\dots,x_p,0,\dots,0)$.
  Then $L=(\wt{g_1},\wt{g_2},\dots,\wt{g_p},x_{p+1},x_{p+2},\dots,x_n)$,
  and $\lp \vv{x} \rp = L+\lp \vv{x} \rp^2$. By Nakayama's Lemma, $L=\lp \vv{x} \rp$.
  By \cite{MR1011461}*{Theorem~17.4}, $g_1,\dots,g_p, x_{p+1}, \ldots, x_n$ is a regular sequence over $k \ldb \vv{x} \rdb$.
\end{proof}

\begin{rem}\label{remark:xnotg}
Lemma~\ref{lem:Tony}, we have 
$g_1, \ldots, g_p, x_{p+1}, \ldots, x_n$ is a regular sequence over $k \ldb \vv{x} \rdb$.
 This allows us to use a change of variables to replace $x_i$ with $g_i$ for $1 \leq i \leq p$ by \cite{MR1011461}*{p.~223}. Hence, we may assume that $g_i=x_i$ for all $1\leq i \leq p$  
and $I$ is of the form $I = (x_1, \ldots, x_p, g_{p+1}, \ldots, g_m)$ where 
$x_1,\ldots,x_p,g_{p+1},\ldots,g_m$ is a minimal set of generators.
\end{rem}

The following lemma is a consequence of Remark \ref{remark:xnotg}.

\begin{lemma}\label{lem:torvanVA}
Take $I = (x_1,\ldots,x_p,g_{p+1},\ldots,g_m)$ as in Remark \ref{remark:xnotg}. One can construct an ideal $J \subseteq (x_{p+1},\ldots,x_n)^2$ such that $I = (x_1,\ldots,x_p) + J$ and $$\Tor[k\ldb \vv{x} \rdb]{i}{\frac{k\ldb \vv{x} \rdb}{(x_1,\ldots,x_p)}}{\frac{k\ldb \vv{x} \rdb}{J}} = 0$$ for all $i \geq 1$.
\end{lemma}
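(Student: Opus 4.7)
The plan is to construct $J$ by a polynomial division: from each generator $g_j$ of $I$ with $j > p$, strip off the contribution of the first $p$ variables. Concretely, for each $j \in \{p+1, \ldots, m\}$ expand $g_j$ as a power series and separate the monomials into two groups, those divisible by some $x_i$ with $i \leq p$ and those involving only $x_{p+1}, \ldots, x_n$. This yields a decomposition $g_j = \sum_{i=1}^{p} x_i h_{ij} + \wt{g_j}$ with $h_{ij} \in R$ and $\wt{g_j}$ a power series in $x_{p+1},\ldots,x_n$ alone. Since $g_j \in \lp \vv{x} \rp^2$, every monomial of $g_j$, and hence every monomial of $\wt{g_j}$, has total degree at least two, so $\wt{g_j} \in (x_{p+1},\ldots,x_n)^2$. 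Setting $J = \lp \wt{g_{p+1}}, \ldots, \wt{g_m} \rp$ gives $J \subseteq (x_{p+1},\ldots,x_n)^2$, and because $g_j \equiv \wt{g_j} \pmod{(x_1,\ldots,x_p)}$, it follows that $I = (x_1,\ldots,x_p) + J$.

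For the Tor-vanishing, the idea is to resolve the first factor by a Koszul complex. Since $x_1, \ldots, x_p$ is part of a regular system of parameters for $k \ldb \vv{x} \rdb$, it is a regular sequence on $R$, and the Koszul complex $K \lp x_1,\ldots,x_p ; R \rp$ is a free resolution of $R / (x_1,\ldots,x_p)$. Therefore
\[
\Tor{i}{\frac{R}{(x_1,\ldots,x_p)}}{\frac{R}{J}} \cong H_i\!\lp K(x_1,\ldots,x_p;R/J) \rp,
\]
so the claim reduces to verifying that $x_1,\ldots,x_p$ remains a regular sequence modulo $J$.

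This last step is where the construction of $J$ pays off. Set $\bar{R} = k \ldb x_{p+1}, \ldots, x_n \rdb$. Because every $\wt{g_j}$ lies in $\bar{R}$, the ideal $J$ is extended from an ideal $\bar J \subseteq \bar R$, and under the identification $R \cong \bar R \ldb x_1, \ldots, x_p \rdb$ we obtain
\[
R/J \;\cong\; (\bar R/\bar J) \ldb x_1, \ldots, x_p \rdb.
\]
In any formal power series ring, the variables themselves form a regular sequence, so $x_1, \ldots, x_p$ is regular on $R/J$ and the Koszul homology vanishes in positive degrees, as required. The only mildly delicate point of the proof is making the decomposition $g_j = \sum_i x_i h_{ij} + \wt{g_j}$ rigorous as a power series identity (the $h_{ij}$ must be assembled monomial-by-monomial and their convergence in the $\mathfrak{m}_R$-adic topology checked), but this is routine once one works in terms of formal sums rather than polynomials.
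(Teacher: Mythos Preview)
Your proof is correct and follows essentially the same approach as the paper: your $\wt{g_j}$ obtained by discarding monomials divisible by some $x_i$ with $i\le p$ is exactly the paper's $\wt{g_j}=g_j(0,\ldots,0,x_{p+1},\ldots,x_n)$, and both arguments establish the Tor-vanishing by showing $x_1,\ldots,x_p$ is a regular sequence on $k\ldb\vv{x}\rdb/J$ (you via the identification $R/J\cong(\bar R/\bar J)\ldb x_1,\ldots,x_p\rdb$, the paper by the equivalent observation that each $x_i$ is a nonzerodivisor modulo $(x_1,\ldots,x_{i-1})+J$). The substitution formulation sidesteps the convergence remark you flagged, but otherwise the proofs coincide.
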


\begin{proof}
Recall from Lemma \ref{lem:tonylem} that $g_i \in \lp \vv{x} \rp^2$ for each $p+1 \leq i \leq m$.
We set $J = (\wt{g}_{p+1}, \ldots, \wt{g}_m)$ where $\wt{g_i} = g_i(0,\ldots,0,x_{p+1},\ldots,x_n)$ and immediately have $J \subseteq (x_{p+1}, \ldots, x_n)^2$ and  $I = (x_1,\ldots,x_p) + J$.

As a consequence, for each $1 \leq i \leq p$, we have $x_i$ is a regular element over $k\ldb \vv{x} \rdb / \lp (x_1, \ldots, x_{i-1}) + J \rp$. Thus, $x_1,\ldots,x_p$ is a regular sequence over $k\ldb \vv{x} \rdb/J$ which gives the desired Tor-vanishing.
\end{proof}

\begin{prop}\label{prop:Ires}
Suppose $I = \lp x_1, \ldots, x_p \rp + J$. If $\mathfrak{J}$ is a minimal free resolution of $k\ldb \vv{x} \rdb/J$ over $k \ldb \vv{x} \rdb$, then the minimal free resolution of $k\ldb \vv{x} \rdb/I$ over $k \ldb \vv{x} \rdb$ is given by $K \otimes_{k \ldb \vv{x} \rdb} \mathfrak{J}$ where $K = K^{k \ldb \vv{x} \rdb}\lp x_1, \ldots, x_p \rp$ is the Koszul complex.
\end{prop}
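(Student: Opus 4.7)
The plan is to view $K \otimes_{k \ldb \vv{x} \rdb} \mathfrak{J}$ as a double complex and prove first that it is acyclic in positive degrees (so it is indeed a free resolution of $k \ldb \vv{x} \rdb / I$) and then that its differentials have entries in the maximal ideal (so it is minimal).

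First, since $x_1, \ldots, x_p$ is a regular sequence in $k \ldb \vv{x} \rdb$, the Koszul complex $K$ is a minimal free resolution of $k \ldb \vv{x} \rdb / (x_1, \ldots, x_p)$. By Lemma \ref{lem:torvanVA}, we have
\[
\Tor[k \ldb \vv{x} \rdb]{i}{\tfrac{k \ldb \vv{x} \rdb}{(x_1, \ldots, x_p)}}{\tfrac{k \ldb \vv{x} \rdb}{J}} = 0 \quad \text{for all } i \geq 1.
\]
Computing this Tor via the flat resolution $K$ of the first factor and the flat resolution $\mathfrak{J}$ of the second factor gives $H_i(K \otimes_{k \ldb \vv{x} \rdb} \mathfrak{J}) = 0$ for $i \geq 1$. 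In degree zero, the right exactness of tensor product yields
\[
H_0(K \otimes_{k \ldb \vv{x} \rdb} \mathfrak{J}) = \tfrac{k \ldb \vv{x} \rdb}{(x_1, \ldots, x_p)} \otimes_{k \ldb \vv{x} \rdb} \tfrac{k \ldb \vv{x} \rdb}{J} = \tfrac{k \ldb \vv{x} \rdb}{(x_1, \ldots, x_p) + J} = \tfrac{k \ldb \vv{x} \rdb}{I}.
\]
Hence $K \otimes_{k \ldb \vv{x} \rdb} \mathfrak{J}$ is a free resolution of $k \ldb \vv{x} \rdb / I$.

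For minimality, recall that the differential of the tensor product of complexes has the form $\partial^K \otimes 1 \pm 1 \otimes \partial^{\mathfrak{J}}$. The entries of $\partial^K$ are (up to sign) the elements $x_1, \ldots, x_p$, which all lie in the maximal ideal $(\vv{x}) \subset k \ldb \vv{x} \rdb$. The entries of $\partial^{\mathfrak{J}}$ also lie in the maximal ideal because $\mathfrak{J}$ is assumed to be minimal. Thus every entry of the differential of $K \otimes_{k \ldb \vv{x} \rdb} \mathfrak{J}$ lies in the maximal ideal, and the resolution is minimal.

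The argument is essentially mechanical; the only place requiring care is the Tor-vanishing input, which is precisely what Lemma \ref{lem:torvanVA} provides. There is no real obstacle—once one sees that the hypothesis was arranged exactly so the Künneth-style collapse of the double complex works, the proposition follows almost formally.
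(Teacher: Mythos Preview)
Your proof is correct and follows essentially the same route as the paper: both use the Tor-vanishing from Lemma~\ref{lem:torvanVA} (equivalently, regularity of $x_1,\dots,x_p$ over $k\ldb\vv{x}\rdb/J$) together with the balanced-Tor fact that the total complex of the tensor of two projective resolutions computes $\operatorname{Tor}$, and then observe that the tensor of minimal resolutions is minimal. Your minimality argument is spelled out more explicitly than the paper's one-line remark, but the content is identical.
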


\begin{proof}
Since $x_1, \ldots, x_p$ is a regular sequence over $k\ldb \vv{x} \rdb$, it is resolved by the Koszul complex $K = K^{k \ldb \vv{x} \rdb}\lp x_1, \ldots, x_p \rp$. Since the sequence is also regular over $k\ldb \vv{x} \rdb/J$, we have $\Hm{i}{K \otimes_{k \ldb \vv{x} \rdb} \mathfrak{J}} = 0$ for $i > 0$ and 
\[\Hm{0}{K \otimes_{k \ldb \vv{x} \rdb} \mathfrak{J}} \overset{\ref{lem:torvanVA}}{\cong}  \frac{k\ldb \vv{x}\rdb}{\lp x_1, \ldots, x_p \rp + J} = \frac{k\ldb \vv{x}\rdb}{I}.\]
Thus, $K \otimes_{k \ldb \vv{x} \rdb} \mathfrak{J}$ resolves $k \ldb \vv{x} \rdb/ I$. We note that the resolution is minimal since the tensor product of minimal resolutions is again minimal.
\end{proof}

\begin{cor}\label{cor:bettiX}
Suppose $I = \lp x_1, \ldots, x_p \rp + J$, then the Betti numbers for $I$ over $k \ldb \vv{x} \rdb$ are given by
\[
\betti[k \ldb \vv{x} \rdb]{\ell}{\frac{k\ldb \vv{x} \rdb}{I}} = \sum_{i = 0}^{\ell} \tbinom{p}{\ell - i} \betti[k \ldb \vv{x} \rdb]{i}{\frac{k\ldb \vv{x} \rdb}{J}}.
\]
\end{cor}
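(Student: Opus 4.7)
The plan is to read off the Betti numbers directly from the minimal free resolution produced in Proposition~\ref{prop:Ires}. By that proposition, a minimal free resolution of $k\ldb \vv{x} \rdb/I$ over $k\ldb \vv{x} \rdb$ is given by $K \otimes_{k\ldb \vv{x} \rdb} \mathfrak{J}$, where $K = K^{k\ldb \vv{x} \rdb}(x_1,\ldots,x_p)$ is the Koszul complex on a regular sequence of length $p$ and $\mathfrak{J}$ is a minimal free resolution of $k\ldb \vv{x} \rdb/J$. Because the resolution is minimal, the $\ell$-th Betti number of $k\ldb \vv{x} \rdb/I$ equals the rank of $(K \otimes_{k\ldb \vv{x} \rdb} \mathfrak{J})_\ell$.

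The key input is that the Koszul complex $K$ on $p$ elements has $K_j$ free of rank $\binom{p}{j}$, so that by the definition of the tensor product of complexes,
\[
(K \otimes_{k\ldb \vv{x} \rdb} \mathfrak{J})_\ell = \bigoplus_{j = 0}^{\ell} K_{\ell - j} \otimes_{k\ldb \vv{x} \rdb} \mathfrak{J}_j,
\]
whose rank is $\sum_{i=0}^{\ell} \binom{p}{\ell - i}\, \betti[k\ldb \vv{x} \rdb]{i}{k\ldb \vv{x} \rdb/J}$. Comparing with $\betti[k\ldb \vv{x} \rdb]{\ell}{k\ldb \vv{x} \rdb/I}$ gives the claimed formula.

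There is essentially no obstacle here: the work was done in establishing that $x_1,\ldots,x_p$ is a regular sequence both on $k\ldb \vv{x} \rdb$ and on $k\ldb \vv{x} \rdb/J$ (Lemmas~\ref{lem:Tony} and~\ref{lem:torvanVA}), which is what makes $K \otimes_{k\ldb \vv{x} \rdb} \mathfrak{J}$ minimal and acyclic in Proposition~\ref{prop:Ires}. The corollary is just the bookkeeping of ranks in each homological degree using the binomial rank formula for the Koszul complex.
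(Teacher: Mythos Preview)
Your argument is correct and follows the same approach as the paper: invoke Proposition~\ref{prop:Ires} to obtain $K \otimes_{k\ldb \vv{x} \rdb} \mathfrak{J}$ as a minimal free resolution, then read off the rank in degree $\ell$ as the convolution $\sum_{i=0}^{\ell} \rank K_{\ell-i} \cdot \rank \mathfrak{J}_i$ and substitute $\rank K_{\ell-i} = \tbinom{p}{\ell-i}$. The paper phrases the intermediate step as a convolution of Betti numbers rather than ranks, but this is only a cosmetic difference.
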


\begin{proof}
From Proposition \ref{prop:Ires}, one deduces \[
\betti[k \ldb \vv{x} \rdb]{\ell}{\frac{k\ldb \vv{x} \rdb}{I}} = \sum_{i = 0}^{\ell} \betti[k \ldb \vv{x} \rdb]{\ell - i}{\frac{k\ldb \vv{x} \rdb}{ \lp x_1, \ldots, x_p \rp }} \betti[k \ldb \vv{x} \rdb]{i}{\frac{k\ldb \vv{x} \rdb}{J}}.
\] The result follows by noting that $\betti[k \ldb \vv{x} \rdb]{\ell - i}{k\ldb \vv{x} \rdb / \lp x_1, \ldots, x_p \rp } = \tbinom{p}{\ell - i}$.
\end{proof}

Similarly, Proposition \ref{prop:Ires} can be used to deduce the following.

\begin{cor}\label{cor:poin}
Suppose $I = \lp x_1, \ldots, x_p \rp + J$, then the Poincar\'{e} series of $I$ over $k \ldb \vv{x} \rdb$ is given by $P_{k \ldb \vv{x} \rdb / I}^{k \ldb \vv{x} \rdb} (t) = (1 + t)^p P_{k \ldb \vv{x} \rdb / J}^{k \ldb \vv{x} \rdb} (t).$
\end{cor}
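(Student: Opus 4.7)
The plan is to apply Proposition~\ref{prop:Ires} directly and pass to generating functions, but the same conclusion can be reached (perhaps more explicitly) by starting from the Betti number formula in Corollary~\ref{cor:bettiX}. Either route is short; I sketch both so that one can pick whichever reads more cleanly in the paper.

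Following the second route, I would begin by writing
\[
P_{k\ldb \vv{x} \rdb / I}^{k\ldb \vv{x} \rdb}(t) = \sum_{\ell \geq 0} \betti[k\ldb \vv{x} \rdb]{\ell}{\tfrac{k\ldb \vv{x} \rdb}{I}}\, t^\ell,
\]
substitute the formula from Corollary~\ref{cor:bettiX}, and swap the order of summation to obtain
\[
P_{k\ldb \vv{x} \rdb / I}^{k\ldb \vv{x} \rdb}(t) = \sum_{i \geq 0} \betti[k\ldb \vv{x} \rdb]{i}{\tfrac{k\ldb \vv{x} \rdb}{J}} \, t^i \sum_{j \geq 0} \binom{p}{j} t^j.
\]
The inner sum over $j$ is the binomial expansion of $(1 + t)^p$, and the remaining factor is exactly $P_{k\ldb \vv{x} \rdb / J}^{k\ldb \vv{x} \rdb}(t)$, yielding the claimed identity.

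Alternatively, one can invoke Proposition~\ref{prop:Ires} directly: it identifies the minimal free resolution of $k\ldb\vv{x}\rdb/I$ with $K \otimes_{k\ldb\vv{x}\rdb} \mathfrak{J}$, where $K$ is the Koszul complex on $x_1,\dots,x_p$. Because the Poincar\'e series of a minimal free resolution counts the ranks of its free modules by homological degree, the ranks of this tensor product are the convolution of the ranks of the two factors. The Poincar\'e series of $K$ over $k\ldb\vv{x}\rdb$ is $(1 + t)^p$ since $\rank K_j = \binom{p}{j}$, and the result follows by multiplicativity of generating functions under convolution.

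There is no real obstacle here; the only point that deserves explicit mention is the justification that taking Poincar\'e series of a tensor product of minimal resolutions gives the product of Poincar\'e series, which is immediate from the standard formula $\rank (K \otimes \mathfrak{J})_\ell = \sum_{i+j=\ell} \rank K_j \cdot \rank \mathfrak{J}_i$. I would likely write the proof in a single line, citing Proposition~\ref{prop:Ires} (or Corollary~\ref{cor:bettiX}) and remarking that the Koszul complex on $p$ elements contributes the factor $(1 + t)^p$.
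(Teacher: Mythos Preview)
Your proposal is correct and matches the paper's approach: the paper does not write out a proof but simply remarks that Proposition~\ref{prop:Ires} can be used to deduce the result, which is precisely your second route (and your first route via Corollary~\ref{cor:bettiX} is an equally valid, essentially equivalent variant).
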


\section{Fiber Products over Residue Fields}\label{Results}

Throughout this section we let $k$ be a field and consider the power series ring $R = k\ldb\vv{x},\vv{y}\rdb$ where $\vv{x}$ and $\vv{y}$ are lists of variables. In particular, we set $\vv{x} = x_1, \ldots, x_n$ and $\vv{y} = y_1, \ldots, y_{n'}$. Moreover, we write $\m$ for the maximal ideal of $R$. The fiber products we consider are of the form $\frac{k\ldb \vv{x} \rdb}{I} \times_k \frac{k\ldb \vv{y} \rdb}{I'}$. To apply the results of \cite{MR4470165}, we observe that
\[\frac{k\ldb \vv{x} \rdb}{I} \times_k \frac{k\ldb \vv{y} \rdb}{I'} \cong \frac{k\ldb \vv{x}, \vv{y} \rdb}{I + \lp \vv{y} \rp} \times_k \frac{k\ldb \vv{x}, \vv{y} \rdb}{\lp \vv{x} \rp + I'} \cong \frac{k \ldb \vv{x}, \vv{y}\rdb}{I + \lp \vv{xy} \rp + I'} \]
where we write $I$ (and $I'$) when working over $k \ldb \vv{x} \rdb$ (and $k \ldb \vv{y} \rdb$) and $k \ldb \vv{x}, \vv{y} \rdb$.

Moving forward, we drop the assumptions that $I \subseteq \lp \vv{x} \rp^2$ and $I' \subseteq \lp \vv{y} \rp^2$. To work around these dropped assumptions, set the following notation.

\begin{notation}\label{notation}
Throughout this section we will consider $I \subseteq \lp \vv{x} \rp$ and $I' \subseteq \lp \vv{y} \rp$. For simplicity, we will write $I$ (and $I'$) regardless if we are working over $k \ldb \vv{x} \rdb$ (and $k \ldb \vv{y} \rdb$) or $R = k \ldb \vv{x}, \vv{y} \rdb$. We fix the following notation.
	\begin{enumerate}[i)]
		\item $p~=~\dim_k\left(\frac{I + \lp \vv{x} \rp^2}{\lp \vv{x} \rp^2}\right)$ and $q~=~\dim_k\left(\frac{I' + \lp \vv{y} \rp^2}{\lp \vv{y} \rp^2}\right)$
		\item Take $I = (x_1, \ldots, x_p) + J$ and $I' = (y_1, \ldots, y_q) + J'$ as in Lemma \ref{lem:torvanVA}.
		\item We set $W = R / \lp \vv{\wt{x}}, \vv{\wt{y}} \rp$ where $\vv{\wt{x}} = x_{p+1}, \ldots, x_n$ and $\vv{\wt{y}} = y_{q+1}, \ldots, y_{n'}$.
		\item Throughout, we set $F = \frac{k \ldb \vv{x} \rdb}{I} \times_k \frac{k \ldb \vv{y} \rdb}{I'}$ and $\wt{F} = \frac{R}{J + \lp \vv{\wt{y}} \rp} \times_W \frac{R}{\lp \vv{\wt{x}} \rp + J'}$.
	\end{enumerate}
\end{notation}

\begin{lemma}\label{lem:defIdeal}
The defining ideal of $F$ over $R$ is given by
 \[J + \lp \vv{\wt{x}} \vv{\wt{y}} \rp + J' + \lp x_1, \ldots, x_p \rp + \lp y_1, \ldots, y_q \rp \]
 where $\vv{\wt{x}} = x_{p + 1}, \ldots, x_n$ and $\vv{\wt{y}} = y_{q + 1}, \ldots, y_{n'}$.
\end{lemma}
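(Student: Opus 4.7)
The plan is to derive the claim directly from Remark~\ref{prop:defideal}, which already gives the defining ideal of $F$ over $R$ as $I + I' + \lp \vv{x} \vv{y} \rp$, by substituting the decompositions of $I$ and $I'$ introduced in Notation~\ref{notation} and then rewriting the cross-product ideal $\lp \vv{x} \vv{y} \rp$ modulo the degree-one generators that have been isolated.

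First, I would apply $I = \lp x_1, \ldots, x_p \rp + J$ and $I' = \lp y_1, \ldots, y_q \rp + J'$ to rewrite the defining ideal as
\[\lp x_1, \ldots, x_p \rp + J + \lp y_1, \ldots, y_q \rp + J' + \lp \vv{x} \vv{y} \rp.\]
The only remaining task is to replace $\lp \vv{x} \vv{y} \rp$ with $\lp \vv{\wt{x}} \vv{\wt{y}} \rp$ modulo $\lp x_1, \ldots, x_p \rp + \lp y_1, \ldots, y_q \rp$. For this I would argue on the generators $x_i y_j$: whenever $i \leq p$ we have $x_i y_j \in \lp x_1, \ldots, x_p \rp$, and whenever $j \leq q$ we have $x_i y_j \in \lp y_1, \ldots, y_q \rp$. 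The generators escaping both cases are precisely those with $p < i \leq n$ and $q < j \leq n'$, which are exactly the generators of $\lp \vv{\wt{x}} \vv{\wt{y}} \rp$. Since the reverse containment $\lp \vv{\wt{x}} \vv{\wt{y}} \rp \subseteq \lp \vv{x} \vv{y} \rp$ is immediate, this yields the equality
\[\lp \vv{x} \vv{y} \rp + \lp x_1, \ldots, x_p \rp + \lp y_1, \ldots, y_q \rp = \lp \vv{\wt{x}} \vv{\wt{y}} \rp + \lp x_1, \ldots, x_p \rp + \lp y_1, \ldots, y_q \rp.\]
Substituting this into the displayed ideal above produces the claimed description.

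The argument is essentially a bookkeeping exercise on generators, so I do not anticipate any substantive obstacle; the only care required is to keep both directions of the generator inclusion visible so that the replacement genuinely holds as an equality of ideals rather than only one containment.
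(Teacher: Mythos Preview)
Your proposal is correct and follows essentially the same approach as the paper's own proof: start from the defining ideal $I + I' + \lp \vv{x}\vv{y} \rp$ (which the paper cites from \cite{MR4470165} rather than Remark~\ref{prop:defideal}, but these are the same result), substitute the decompositions of $I$ and $I'$, and then drop the cross-terms $x_i y_j$ with $i \leq p$ or $j \leq q$ because they already lie in $\lp x_1,\dots,x_p \rp$ or $\lp y_1,\dots,y_q \rp$. If anything, your write-up is slightly more explicit about the reverse containment than the paper's.
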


\begin{proof}
Using \cite{MR4470165} yields $F \cong  k \ldb \vv{x}, \vv{y} \rdb / \lp I, \vv{x} \vv{y}, I' \rp$. Thus, the defining ideal is $$I + \lp \vv{x} \vv{y} \rp + I' = J + \lp \vv{x} \vv{y} \rp + J' + \lp x_1, \ldots, x_p \rp + \lp y_1, \ldots, y_q \rp.$$ However, we wish to write the defining ideal with a minimal set of generators. In particular, we show that we can remove any generator of the form $x_i y_j$ where $1 \leq i \leq p$ or $1 \leq j \leq q$. This leaves us with $\{x_iy_j : p < i \leq n, q < j \leq n'\}$. We denote the ideal generated by the set as $\lp \vv{\wt{x}} \vv{\wt{y}} \rp$ and observe \[I + \lp \vv{x} \vv{y} \rp + I' = J + \lp \vv{\wt{x}} \vv{\wt{y}} \rp + J' + \lp x_1, \ldots, x_p \rp + \lp y_1, \ldots, y_q \rp. \qedhere\]
\end{proof}

This next proposition uses Lemma \ref{lem:defIdeal} to decompose the fiber product $F$ into the tensor product of a complete intersection ring and a fiber product that can be resolved using techniques from \cite{MR4470165}.

\begin{prop}\label{prop:fibIso}
Under the set-up of Notation \ref{notation}, one has
 \[
 F \cong \wt{F} \otimes_R \frac{R}{\lp x_1, \ldots, x_p, y_1, \ldots, y_{q}\rp}
 \]
\end{prop}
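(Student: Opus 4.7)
The plan is to show both sides of the asserted isomorphism are quotients of $R$ by the same ideal. By Lemma \ref{lem:defIdeal}, the left side $F$ equals $R/\mathfrak{a}_F$ where $\mathfrak{a}_F = J + J' + \lp \vv{\wt{x}} \vv{\wt{y}} \rp + (x_1, \ldots, x_p) + (y_1, \ldots, y_q)$. So it suffices to identify $\wt{F}$ with $R/\mathfrak{a}_{\wt{F}}$ for an explicit ideal $\mathfrak{a}_{\wt{F}}$ and then verify $\mathfrak{a}_{\wt{F}} + (x_1, \ldots, x_p, y_1, \ldots, y_q) = \mathfrak{a}_F$, since tensoring $R/\mathfrak{a}_{\wt{F}}$ with $R/(x_1,\ldots,x_p,y_1,\ldots,y_q)$ yields $R/(\mathfrak{a}_{\wt{F}} + (x_1,\ldots,x_p,y_1,\ldots,y_q))$.

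First I would observe that the diagonal map $R \to R/(J + (\vv{\wt{y}})) \times R/((\vv{\wt{x}}) + J')$ factors through $\wt{F}$ and is surjective onto $\wt{F}$: given $(a,b) \in \wt{F}$, any lift $\tilde{a}\in R$ of $a$ can be adjusted by an element of $(\vv{\wt{y}})$ to also lift $b$ modulo $(\vv{\wt{x}}) + J'$, using that $a$ and $b$ have the same image in $W = R/((\vv{\wt{x}})+(\vv{\wt{y}}))$. Hence $\wt{F} \cong R/\mathfrak{a}_{\wt{F}}$ where $\mathfrak{a}_{\wt{F}} = (J + (\vv{\wt{y}})) \cap ((\vv{\wt{x}}) + J')$. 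The main technical step is to prove $\mathfrak{a}_{\wt{F}} = J + J' + (\vv{\wt{x}}\vv{\wt{y}})$. The containment $\supseteq$ is immediate from $J \subseteq (\vv{\wt{x}})$, $J' \subseteq (\vv{\wt{y}})$, and $\vv{\wt{x}}\vv{\wt{y}} \subseteq (\vv{\wt{x}}) \cap (\vv{\wt{y}})$. For $\subseteq$, given $f$ in the intersection, I would write $f = a + b = c + d$ with $a \in J \subseteq (\vv{\wt{x}})$, $b \in (\vv{\wt{y}})$, $c \in (\vv{\wt{x}})$, and $d \in J' \subseteq (\vv{\wt{y}})$. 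Then $a - c = d - b \in (\vv{\wt{x}}) \cap (\vv{\wt{y}})$. Because $\vv{\wt{x}}$ and $\vv{\wt{y}}$ involve disjoint variables, $\vv{\wt{x}}, \vv{\wt{y}}$ is a regular sequence in $R$, giving $(\vv{\wt{x}}) \cap (\vv{\wt{y}}) = (\vv{\wt{x}})(\vv{\wt{y}}) = (\vv{\wt{x}}\vv{\wt{y}})$. So $c \in J + (\vv{\wt{x}}\vv{\wt{y}})$, whence $f = c + d \in J + J' + (\vv{\wt{x}}\vv{\wt{y}})$.

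Combining these pieces, $\wt{F} \otimes_R R/(x_1, \ldots, x_p, y_1, \ldots, y_q) \cong R/(J + J' + (\vv{\wt{x}}\vv{\wt{y}}) + (x_1,\ldots,x_p) + (y_1,\ldots,y_q)) = R/\mathfrak{a}_F \cong F$, as required. The principal obstacle is the ideal intersection identity, and its success hinges on the disjointness of the variable sets $\vv{\wt{x}}$ and $\vv{\wt{y}}$; the surjectivity of $R \to \wt{F}$ and the final bookkeeping are routine once the intersection is pinned down.
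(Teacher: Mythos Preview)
Your proof is correct and follows the same overall strategy as the paper: both identify $\wt{F}$ with $R/(J + J' + (\vv{\wt{x}}\vv{\wt{y}}))$ and then tensor with $R/(x_1,\ldots,x_p,y_1,\ldots,y_q)$ to recover the defining ideal of $F$ from Lemma~\ref{lem:defIdeal}. The only difference is that the paper obtains the presentation of $\wt{F}$ in one line by invoking \cite{MR4470165}*{Proposition~2.1} together with Lemma~\ref{lem:torvanVA}, whereas you reprove that special case by hand via surjectivity of the diagonal $R\to\wt{F}$ and the intersection computation $(J+(\vv{\wt{y}}))\cap((\vv{\wt{x}})+J') = J + J' + (\vv{\wt{x}}\vv{\wt{y}})$; your argument is self-contained but exactly recovers what the citation provides.
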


\begin{proof}
To start, we note that Lemma \ref{lem:torvanVA} and \cite{MR4470165}*{Proposition 2.1} combine to produce $\wt{F} \cong R / \lp J + \lp \vv{\wt{x}} \vv{\wt{y}} \rp + J' \rp$. We now observe the following.
\begin{align*}
F &\overset{\ref{lem:defIdeal}}{\cong} \frac{R}{J + \lp \vv{\wt{x}} \vv{\wt{y}} \rp + J' + \lp x_1, \ldots, x_p, y_1, \ldots, y_q \rp} \\
 &\cong \wt{F} \otimes_R \frac{R}{\lp x_1, \ldots, x_p, y_1, \ldots, y_{q}\rp} \qedhere
\end{align*}
\end{proof}

We will use this isomorphism to construct a minimal free resolution of our fiber product. To do this, we first need the following results.

\begin{lemma}\label{lem:regseqFull}
The sequence $ x_1, \ldots, x_p, y_1, \ldots, y_q$ is regular over $\wt{F}$.
\end{lemma}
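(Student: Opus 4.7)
The plan is to realize $\wt{F}$ as a formal power series extension in the variables $x_1, \ldots, x_p, y_1, \ldots, y_q$ over a smaller coefficient ring. Set $A = k\ldb \vv{\wt{x}}, \vv{\wt{y}} \rdb$, so that $R = A\ldb x_1, \ldots, x_p, y_1, \ldots, y_q \rdb$. The key observation is that the defining ideal $K := J + \lp \vv{\wt{x}}\vv{\wt{y}} \rp + J'$ of $\wt{F}$ involves only the variables $\vv{\wt{x}}, \vv{\wt{y}}$: Lemma \ref{lem:torvanVA} gives $J \subseteq (\vv{\wt{x}})^2$ and $J' \subseteq (\vv{\wt{y}})^2$, and the generators $x_i y_j$ of $\lp \vv{\wt{x}}\vv{\wt{y}} \rp$ already belong to $A$. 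Therefore $K$ extends to $R$ from an ideal of $A$, and one obtains
\[
\wt{F} \;=\; R/K \;\cong\; (A/K)\ldb x_1, \ldots, x_p, y_1, \ldots, y_q \rdb.
\]

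Under this isomorphism, $x_1, \ldots, x_p, y_1, \ldots, y_q$ become honest power series variables adjoined to $A/K$. Since a power series variable is a nonzerodivisor in any formal power series ring and modding it out produces a power series ring in one fewer variable, an induction on the length of the sequence yields that $x_1, \ldots, x_p, y_1, \ldots, y_q$ is a regular sequence on $\wt{F}$.

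The main potential obstacle is justifying the displayed isomorphism. I would handle this by invoking the standard fact that $R/KR \cong (A/K)\ldb x_1, \ldots, x_p, y_1, \ldots, y_q \rdb$ whenever $K \subseteq A$ and $R = A\ldb x_1, \ldots, x_p, y_1, \ldots, y_q \rdb$; this follows either from the universal property of formal power series rings or by expanding each element of $R$ as a power series in $x_1, \ldots, x_p, y_1, \ldots, y_q$ with coefficients in $A$ and reducing those coefficients modulo $K$. Once the isomorphism is in place, the regularity conclusion is essentially immediate.
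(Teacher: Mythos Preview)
Your argument is correct and is essentially the same idea as the paper's proof, made more explicit. The paper simply observes that the defining ideal $J + (\vv{\wt{x}}\vv{\wt{y}}) + J'$ is generated by elements not involving $x_1,\ldots,x_p,y_1,\ldots,y_q$ and concludes immediately; you unpack that same observation by writing $\wt{F}\cong (A/K)\ldb x_1,\ldots,x_p,y_1,\ldots,y_q\rdb$ and then reading off regularity of the adjoined variables.
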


\begin{proof}
By construction, one has $J + \lp \vv{\wt{x}} \vv{\wt{y}} \rp + J' \subseteq \lp x_{p+1}, \ldots, x_n, y_{q+1}, \ldots, y_{n'} \rp$ with none of the generators containing $x_1, \ldots, x_p$ or $y_1,\ldots,y_q$. Combining this above with the fact $x_1, \ldots, x_p, y_1, \ldots, y_q$ is a list of variables, we conclude that the list of variables is a regular sequence over $\wt{F}$.
\end{proof}

\begin{rem}\label{remark:regular}
In light of Lemma \ref{lem:regseqFull}, we get the minimal free resolution of $F$ over $R$ by tensoring (over $R$) the minimal free resolutions of $R / \lp x_1, \ldots, x_p, y_1, \ldots, y_q \rp$ and $\wt{F}$. Since the former is a quotient by a list of variables, it is resolved by the Koszul complex on the sequence. The latter is resolved by the mapping cone in \cite{MR4470165}*{Theorem 5.2}. These observations provide us with the tools needed to recover the Betti numbers (over $R$) of our fiber product.
\end{rem}

In the following theorem, we make use of two identities for binomial coefficients; Pascal's Identity and Vandermonde's Identity: \[\tbinom{n}{k} + \tbinom{n}{k + 1} = \tbinom{n + 1}{k + 1} \hbox{ and } \tbinom{m + n}{k} = \sum_{i + j = k} \tbinom{m}{i} \tbinom{n}{j}.\]

\begin{theorem}\label{thm:Betti}
Using Notation \ref{notation}, the Betti numbers of $F$ over $R$ are given by $\beta_0^R(F) = 1$ and, for~$t \geq 1$,
\begin{align*}
\beta_t^R(F) =& \tbinom{n + n'}{t + 1} - \tbinom{n' + p + 1}{t + 1} - \tbinom{n + q + 1}{t + 1} + \tbinom{p + q + 1}{t + 1} \\
 & \qquad + \sum_{w + z = t} \lp \beta_w^R \lp \tfrac{R}{I} \rp \tbinom{n'}{z} + \tbinom{n}{z} \beta_w^R\lp \tfrac{R}{I'} \rp\rp.
\end{align*}
\end{theorem}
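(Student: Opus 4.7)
The plan is to combine the tensor-product description of the minimal resolution of $F$ from Remark \ref{remark:regular} with a direct rank count on the cone construction of \cite{MR4470165}*{Theorem 5.2}, and then translate from data about $J, J'$ to data about $I, I'$ using Corollary \ref{cor:poin}. First, Remark \ref{remark:regular} identifies the minimal free resolution of $F$ over $R$ as $K^R(x_1,\ldots,x_p,y_1,\ldots,y_q) \otimes_R \mathcal F$, where $\mathcal F = \Cone \Omega$ comes from applying the mapping cone to $\wt F$ (applicable since $J \subseteq (\vv{\wt{x}})^2$ and $J' \subseteq (\vv{\wt{y}})^2$). At the level of Poincar\'e series this gives $P^R_F(t) = (1+t)^{p+q}\, P^R_{\wt F}(t)$, reducing the problem to $P^R_{\wt F}(t)$.

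Next I read off $P^R_{\wt F}(t)$ directly from $\Cone \Omega$. The resolutions $\mathcal X$ of $R/(\vv{\wt{x}})$ and $\mathcal Y$ of $R/(\vv{\wt{y}})$ are Koszul, with Poincar\'e series $(1+t)^{n-p}$ and $(1+t)^{n'-q}$, while $\mathcal S, \mathcal T$ minimally resolve $R/J, R/J'$. Unpacking the star product $\mathcal X \ast_R \mathcal Y$ (whose degree-$0$ part is $R$ and whose higher degrees shift $\mathcal X_{\geq 1} \otimes \mathcal Y_{\geq 1}$ down by one) together with the two $\Sigma^{-1}$-shifted sources of $\Omega$ yields
\[
P^R_{\wt F}(t) = 1 + \tfrac{((1+t)^{n-p}-1)((1+t)^{n'-q}-1)}{t} + (P^R_{R/J}(t)-1)(1+t)^{n'-q} + (1+t)^{n-p}(P^R_{R/J'}(t)-1).
\]

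To finish, I invoke the $R$-version of Corollary \ref{cor:poin}---namely $P^R_{R/I}(t) = (1+t)^p P^R_{R/J}(t)$ and $P^R_{R/I'}(t) = (1+t)^q P^R_{R/J'}(t)$, obtained from the stated version via the flat inclusion $k\ldb \vv{x}\rdb \hookrightarrow R$ (which preserves minimal resolutions, and hence Betti numbers, since the entries of differentials remain in the maximal ideal). Multiplying the displayed formula by $(1+t)^{p+q}$ and collapsing factors such as $(1+t)^{n-p}(1+t)^{p+q}=(1+t)^{n+q}$, the $R/I$ and $R/I'$ terms become $(1+t)^{n'}P^R_{R/I}(t)$ and $(1+t)^n P^R_{R/I'}(t)$, contributing the two Vandermonde sums upon extracting the $t^\tau$-coefficient. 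The remaining purely polynomial contribution is $\binom{n+n'}{\tau+1} - \binom{n+q}{\tau+1} - \binom{n'+p}{\tau+1} + \binom{p+q}{\tau+1} + \binom{p+q}{\tau} - \binom{n'+p}{\tau} - \binom{n+q}{\tau}$, which collapses via Pascal's identity $\binom{k}{\tau+1}+\binom{k}{\tau}=\binom{k+1}{\tau+1}$ applied to the three pairs $k=p+q,\, n+q,\, n'+p$ into $\binom{n+n'}{\tau+1} - \binom{n+q+1}{\tau+1} - \binom{n'+p+1}{\tau+1} + \binom{p+q+1}{\tau+1}$, matching the theorem.

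The main obstacle I anticipate is the careful bookkeeping in the rank count for $\Cone \Omega$: tracking the $n=0$ endpoint of $\mathcal X \ast_R \mathcal Y$ against the $\Sigma^{-1}$ shifts in the two sources of $\Omega$, and pairing Pascal's identity correctly at the end. Everything else is routine generating-function manipulation, and the $\tau=0$ case provides a sanity check returning $\beta_0^R(F)=1$.
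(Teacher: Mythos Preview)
Your argument is correct and follows the same overall architecture as the paper: reduce $F$ to $\wt F$ via the regular sequence $x_1,\ldots,x_p,y_1,\ldots,y_q$ (Proposition~\ref{prop:fibIso} and Remark~\ref{remark:regular}), compute the Betti data of $\wt F$ from the cone construction, convolve with the Koszul contribution, and then trade $J,J'$ for $I,I'$. The genuine methodological difference is that you package everything as Poincar\'e series and manipulate polynomials, whereas the paper works coefficient-by-coefficient, citing \cite{MR4470165}*{Corollary~5.3} for $\beta_\ell^R(\wt F)$ and then carrying out five separate Vandermonde convolutions (equations~(\ref{eqn:1})--(\ref{eqn:5})) before the Pascal cleanup. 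Your closed form for $P^R_{\wt F}(t)$ is exactly the generating function whose coefficients are that cited formula, so the content is the same; what your route buys is that the five displayed sums collapse into the single polynomial multiplication by $(1+t)^{p+q}$, with Corollary~\ref{cor:poin} doing the work that Corollary~\ref{cor:bettiX} does in the paper. The paper's route, conversely, makes each combinatorial step visible and avoids having to justify the $\Sigma^{-1}$/cone degree bookkeeping that you correctly flagged as the delicate point. Your appeal to flat base change $k\ldb\vv{x}\rdb\hookrightarrow R$ to lift Corollary~\ref{cor:poin} to $R$ is fine and is used implicitly in the paper as well (e.g.\ in equation~(\ref{eqn:4})).
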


\begin{proof}
Since $x_1, \ldots, x_p, y_1, \ldots, y_q$ is regular (see Remark \ref{remark:regular}), we know \[\beta_{\ell}^R \lp \tfrac{R}{\lp x_1, \ldots, x_p, y_1, \ldots, y_{q}\rp} \rp = \tbinom{p + q}{\ell}.\] By Lemma \ref{lem:regseqFull}, the sequence $x_1, \ldots, x_p, y_1, \ldots, y_q$ is regular over $\wt{F}$. As a consequence, Proposition \ref{prop:fibIso} produces the formula \[\beta_t^R \lp F \rp = \sum_{\ell + k = t} \beta_{\ell}^R \lp \wt{F} \rp \tbinom{p + q}{k} = \tbinom{p + q}{t} + \sum_{\underset{\ell \neq 0}{\ell + k = t}} \beta_{\ell}^R \lp \wt{F} \rp \tbinom{p + q}{k} .\]
Next, \cite{MR4470165}*{Corollary 3.6, Corollary 5.3} gives us the following expression:
\begin{align*}
\beta_{\ell}^R \lp \wt{F} \rp =& \tbinom{n + n' - p - q}{\ell + 1} - \tbinom{n - p + 1}{\ell + 1} - \tbinom{n' - q + 1}{\ell + 1} \\
 & \qquad + \sum_{i + j = \ell} \lp \beta_i^R \lp \tfrac{R}{J} \rp \tbinom{n' - q}{j} + \tbinom{n - p}{j} \beta_i^R \lp \tfrac{R}{J'} \rp \rp.
\end{align*}
To use this expression in the formula for $\beta_t^R \lp F \rp$, we compute the following.
\begin{align}
\sum_{\underset{\ell \neq 0}{\ell + k = t}} \tbinom{n + n' - p - q}{\ell + 1} \tbinom{p + q}{k}  &= \sum_{\underset{u \neq 0, 1}{u + k = t + 1}} \tbinom{n + n' - p - q}{u} \tbinom{p + q}{k} \nonumber \\
 & \hspace{-1cm} = \sum_{u + k = t + 1} \tbinom{n + n' - p - q}{u} \tbinom{p + q}{k} - \tbinom{p + q}{t + 1} - \lp n + n' - p - q \rp \tbinom{p + q}{t} \nonumber \\
 & \hspace{-1cm} = \tbinom{n + n'}{t + 1} - \tbinom{p + q}{t + 1} - \lp n + n' - p - q \rp \tbinom{p + q}{t} \label{eqn:1} 
\end{align}
The same argument can be used to obtain the following two formulas.
\begin{align}
\sum_{\underset{\ell \neq 0}{\ell + k = t}} \tbinom{n - p + 1}{\ell + 1} \tbinom{p + q}{k} = \tbinom{n + q + 1}{t + 1} - \tbinom{p + q}{t + 1} - \lp n - p + 1 \rp \tbinom{p + q}{t} \label{eqn:2}
\end{align}
\begin{align}
\sum_{\underset{\ell \neq 0}{\ell + k = t}} \tbinom{n'- q + 1}{\ell + 1} \tbinom{p + q}{k} = \tbinom{n' + p + 1}{t + 1} - \tbinom{p + q}{t + 1} - \lp n' - q + 1 \rp \tbinom{p + q}{t} \label{eqn:3}
\end{align}
Next, we compute the following using Corollary \ref{cor:bettiX}.
\begin{align}
\sum_{\underset{\ell \neq 0}{\ell + c = t}} \sum_{i + j = \ell} \beta_i^R \lp \tfrac{R}{J} \rp \tbinom{n' - q}{j} \tbinom{p + q}{c} &= -\tbinom{p + q}{t} + \sum_{i + j + a + b = t} \beta_i^R \lp \tfrac{R}{J} \rp \tbinom{n' - q}{j} \tbinom{p}{a} \tbinom{q}{b} \nonumber \\
 & \hspace{-3cm} = -\tbinom{p + q}{t} + \sum_{w + z = t} \lp \sum_{i + a = w} \beta_i^R \lp \tfrac{R}{J} \rp \tbinom{p}{a} \rp \lp \sum_{j + b = z} \tbinom{n' - q}{j}  \tbinom{q}{b} \rp \nonumber \\
 & \hspace{-3cm} \overset{\ref{cor:bettiX}}{=} -\tbinom{p + q}{t} + \sum_{w + z = t} \beta_w^R \lp \tfrac{R}{I} \rp \tbinom{n'}{z} \label{eqn:4}
\end{align}
Using the same strategy, we obtain the following.
\begin{align}
\sum_{\underset{\ell \neq 0}{\ell + c = t}} \sum_{i + j = \ell} \beta_i^R \lp \tfrac{R}{J'} \rp \tbinom{n - p}{j} \tbinom{p + q}{c} = -\tbinom{p + q}{t} + \sum_{w + z = t} \beta_w^R \lp \tfrac{R}{I'} \rp \tbinom{n}{z} \label{eqn:5}
\end{align}
Using Equations (\ref{eqn:1}) -- (\ref{eqn:5}), we compute the desired formula below.
\begin{align*}
\beta_t^R \lp F \rp =& \tbinom{p + q}{t} + \sum_{\underset{\ell \neq 0}{\ell + k = t}} \beta_{\ell}^R \lp \wt{F} \rp \tbinom{p + q}{k} \\
 =&  \tbinom{p + q}{t} + \tbinom{n + n'}{t + 1} - \tbinom{n' + p + 1}{t + 1} - \tbinom{n + q + 1}{t + 1} + \tbinom{p + q}{t + 1} \\
 & \qquad + \sum_{w + z = t} \beta_w^R \lp \tfrac{R}{I} \rp \tbinom{n'}{z} + \sum_{w + z = t} \beta_w^R \lp \tfrac{R}{I'} \rp \tbinom{n}{z} \\
 =& \tbinom{n + n'}{t + 1} - \tbinom{n' + p + 1}{t + 1} - \tbinom{n + q + 1}{t + 1} + \tbinom{p + q + 1}{t + 1} \\
 & \qquad + \sum_{w + z = t} \lp \beta_w^R \lp \tfrac{R}{I} \rp \tbinom{n'}{z} + \beta_w^R \lp \tfrac{R}{I'} \rp \tbinom{n}{z} \rp \qedhere
\end{align*}
\end{proof}

With this theorem, we can obtain a formula relating the Poincar\'e series for $F$ (as in Theorem \ref{thm:Betti}) to the series for $R/I$ and $R/I'$. However, we will bypass this approach using \cite{MR4470165}*{Corollary 5.4}.

\begin{theorem}\label{thm:Poin}
Using Notation \ref{notation}, the Poincar\'e series of $F$ over $R$ satisfies \[\frac{ P_F^R(t) - \lp 1 + t \rp^{n'} P_{R/I}^R(t) - \lp 1 + t \rp^{n} P_{R/I'}^R(t) + \lp 1 + t \rp^{n + n'} }{\lp \lp 1 + t \rp^{n \vphantom{n'}} - \lp 1 + t \rp^p \rp \lp \lp1 + t \rp^{n'} - \lp 1 + t \rp^q \rp} = \frac{t + 1}{t}.\]
\end{theorem}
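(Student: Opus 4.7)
The plan is to reduce the Poincar\'e series of $F$ to that of the ``nice'' fiber product $\wt F$ via Proposition~\ref{prop:fibIso}, apply \cite{MR4470165}*{Corollary 5.4} to the reduced fiber product, and finish by an algebraic substitution. First, I would use Lemma~\ref{lem:regseqFull}: since $x_1,\ldots,x_p,y_1,\ldots,y_q$ is a regular sequence on $\wt F$ and $F\cong \wt F\otimes_R R/(x_1,\ldots,x_p,y_1,\ldots,y_q)$ by Proposition~\ref{prop:fibIso}, tensoring a minimal free resolution of $\wt F$ over $R$ with the Koszul complex on this regular sequence gives a minimal free resolution of $F$ over $R$. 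This yields
\[P_F^R(t)=(1+t)^{p+q}\,P_{\wt F}^R(t).\]

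Second, since the defining ideal $J+(\vv{\wt{x}}\,\vv{\wt{y}})+J'$ of $\wt F$ involves only the variables $\vv{\wt{x}},\vv{\wt{y}}$, the ring $\wt F$ is the base change to $R$ of the fiber product $k\ldb\vv{\wt{x}}\rdb/J\times_k k\ldb\vv{\wt{y}}\rdb/J'$ defined over the smaller power series ring $R':=k\ldb\vv{\wt{x}},\vv{\wt{y}}\rdb$. Because $R\cong R'\ldb x_1,\ldots,x_p,y_1,\ldots,y_q\rdb$ is faithfully flat over $R'$, the Poincar\'e series of $\wt F$, $R/J$, and $R/J'$ over $R$ agree with their counterparts over $R'$. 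By construction $J\subseteq(\vv{\wt{x}})^2$ and $J'\subseteq(\vv{\wt{y}})^2$, so Lemma~\ref{lem:RedCase} applies to this smaller fiber product, and \cite{MR4470165}*{Corollary 5.4} gives
\[\frac{P_{\wt F}^R(t)-(1+t)^{n'-q}P_{R/J}^R(t)-(1+t)^{n-p}P_{R/J'}^R(t)+(1+t)^{n+n'-p-q}}{\bigl((1+t)^{n-p}-1\bigr)\bigl((1+t)^{n'-q}-1\bigr)}=\frac{t+1}{t}.\]

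Third, I would translate back to $F$, $R/I$, and $R/I'$ using the relation from Step~1 together with the $R$-version of Corollary~\ref{cor:poin}, namely $P_{R/I}^R(t)=(1+t)^p P_{R/J}^R(t)$ and $P_{R/I'}^R(t)=(1+t)^q P_{R/J'}^R(t)$. Multiplying the numerator and denominator of the displayed equation by $(1+t)^{p+q}$ and invoking the identity
\[(1+t)^{p+q}\bigl((1+t)^{n-p}-1\bigr)\bigl((1+t)^{n'-q}-1\bigr)=\bigl((1+t)^n-(1+t)^p\bigr)\bigl((1+t)^{n'}-(1+t)^q\bigr)\]
then converts the left-hand side into precisely the quotient appearing in the statement of the theorem, while the right-hand side $\tfrac{t+1}{t}$ is untouched.

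I do not anticipate a significant obstacle: the conceptual content is already contained in Proposition~\ref{prop:fibIso}, Corollary~\ref{cor:poin}, and \cite{MR4470165}*{Corollary 5.4}, and the remaining work is bookkeeping with powers of $(1+t)$. The only step meriting brief justification is the flatness argument used to transfer Poincar\'e series between $R'$ and $R$, which is immediate from the description $R\cong R'\ldb x_1,\ldots,x_p,y_1,\ldots,y_q\rdb$.
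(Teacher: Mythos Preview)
Your proposal is correct and follows essentially the same route as the paper: reduce $P_F^R$ to $P_{\wt F}^R$ via Proposition~\ref{prop:fibIso} and Lemma~\ref{lem:regseqFull}, invoke \cite{MR4470165}*{Corollary~5.4} for $\wt F$, then multiply by $(1+t)^{p+q}/(1+t)^{p+q}$ and apply Corollary~\ref{cor:poin} to pass from $J,J'$ back to $I,I'$. The only cosmetic difference is that the paper applies \cite{MR4470165}*{Corollary~5.4} to $\wt F$ directly over $R$, whereas you descend to $R'=k\ldb\vv{\wt x},\vv{\wt y}\rdb$ and use flatness to transfer Poincar\'e series; both justifications lead to the identical intermediate identity, and the remaining algebra is the same.
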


\begin{proof}
Proposition \ref{prop:fibIso} and Lemma \ref{lem:regseqFull} give $P_{F}^R(t) = (1 + t)^{p + q} P_{\wt{F}}^R(t)$. With this in mind, we consider \cite{MR4470165}*{Corollary 5.4} applied to $\wt{F}$ to get \[\tfrac{ P_{\wt{F}}^R(t) - \lp 1 + 1 \rp^{n' - q} P_{R/J}^R(t) - \lp 1 + 1 \rp^{n - p} P_{R/J'}^R(t) + \lp 1 + t \rp^{(n - p) + (n' - q)}}{\lp \lp 1 + t \rp^{n \vphantom{n'} - p} - 1 \rp \lp \lp 1 + t \rp^{n' - q} - 1 \rp} = \tfrac{t + 1}{t}.\] To replace $P_{\wt{F}}^R(t)$ with $P_F^R(t)$, we multiply the left-hand side of the expression by $\frac{(1 + t)^{p + q}}{(1 + t)^{p + q}}$. Since Corollary \ref{cor:poin} tells us that $P_{R/I}^R(t) = (1 + t)^p P_{R/J}^R(t)$ and $P_{R/I'}^R(t) = (1 + t)^q P_{R/J'}^R(t)$, we obtain the desired formula.
\end{proof}

\bibliographystyle{plain}

\begin{bibdiv}
\begin{biblist}

\bib{MR2929675}{article}{
      author={Ananthnarayan, H.},
      author={Avramov, Luchezar~L.},
      author={Moore, W.~Frank},
       title={Connected sums of {G}orenstein local rings},
        date={2012},
        ISSN={0075-4102,1435-5345},
     journal={J. Reine Angew. Math.},
      volume={667},
       pages={149\ndash 176},
         url={https://doi.org/10.1515/crelle.2011.132},
      review={\MR{2929675}},
}

\bib{MR3924433}{article}{
      author={Ananthnarayan, H.},
      author={Celikbas, Ela},
      author={Laxmi, Jai},
      author={Yang, Zheng},
       title={Decomposing {G}orenstein rings as connected sums},
        date={2019},
        ISSN={0021-8693,1090-266X},
     journal={J. Algebra},
      volume={527},
       pages={241\ndash 263},
         url={https://doi.org/10.1016/j.jalgebra.2019.01.036},
      review={\MR{3924433}},
}

\bib{MR4078358}{article}{
      author={Ananthnarayan, H.},
      author={Celikbas, Ela},
      author={Laxmi, Jai},
      author={Yang, Zheng},
       title={Associated graded rings and connected sums},
        date={2020},
        ISSN={0011-4642,1572-9141},
     journal={Czechoslovak Math. J.},
      volume={70(145)},
      number={1},
       pages={261\ndash 279},
         url={https://doi.org/10.21136/CMJ.2019.0259-18},
      review={\MR{4078358}},
}

\bib{MR1268330}{article}{
   author={Avramov, Luchezar L.},
   author={Foxby, Hans-Bj\o rn},
   author={Herzog, Bernd},
   title={Structure of local homomorphisms},
   journal={J. Algebra},
   volume={164},
   date={1994},
   number={1},
   pages={124--145},
   issn={0021-8693},
   review={\MR{1268330}},
   doi={10.1006/jabr.1994.1057},
}

\bib{CCCEGIM2023Arf}{misc}{
      author={Celikbas, Ela},
      author={Celikbas, Olgur},
      author={Ciuperc\u{a}, C\u{a}t\u{a}lin},
      author={Endo, Naoki},
      author={Goto, Shiro},
      author={Isobe, Ryotaro},
      author={Matsuoka, Naoyuki},
       title={On the ubiquity of {A}rf rings},
        date={J. Commut. Algebra, to appear, 2023},
}

\bib{MR2580452}{article}{
      author={Christensen, Lars~Winther},
      author={Striuli, Janet},
      author={Veliche, Oana},
       title={Growth in the minimal injective resolution of a local ring},
        date={2010},
        ISSN={0024-6107,1469-7750},
     journal={J. Lond. Math. Soc. (2)},
      volume={81},
      number={1},
       pages={24\ndash 44},
         url={https://doi.org/10.1112/jlms/jdp058},
      review={\MR{2580452}},
}

\bib{MR4273207}{article}{
      author={Endo, Naoki},
      author={Goto, Shiro},
      author={Isobe, Ryotaro},
       title={Almost {G}orenstein rings arising from fiber products},
        date={2021},
        ISSN={0008-4395,1496-4287},
     journal={Canad. Math. Bull.},
      volume={64},
      number={2},
       pages={383\ndash 400},
         url={https://doi.org/10.4153/S000843952000051X},
      review={\MR{4273207}},
}

\bib{MR0682707}{article}{
      author={Facchini, Alberto},
       title={Fiber products and {M}orita duality for commutative rings},
        date={1982},
        ISSN={0041-8994},
     journal={Rend. Sem. Mat. Univ. Padova},
      volume={67},
       pages={143\ndash 159},
         url={http://www.numdam.org/item?id=RSMUP_1982__67__143_0},
      review={\MR{682707}},
}

\bib{MR4232178}{article}{
      author={Freitas, T.~H.},
      author={Jorge~P\'{e}rez, V.~H.},
      author={Wiegand, R.},
      author={Wiegand, S.},
       title={Vanishing of {T}or over fiber products},
        date={2021},
        ISSN={0002-9939,1088-6826},
     journal={Proc. Amer. Math. Soc.},
      volume={149},
      number={5},
       pages={1817\ndash 1825},
         url={https://doi.org/10.1090/proc/14907},
      review={\MR{4232178}},
}

\bib{MR4470165}{article}{
      author={Geller, Hugh},
       title={Minimal free resolutions of fiber products},
        date={2022},
        ISSN={0002-9939},
     journal={Proc. Amer. Math. Soc.},
      volume={150},
      number={10},
       pages={4159\ndash 4172},
         url={https://doi-org.wvu.idm.oclc.org/10.1090/proc/15963},
      review={\MR{4470165}},
}

\bib{MR0092776}{article}{
   author={Kostrikin, A. I.},
   author={\v{S}afarevi\v{c}, I. R.},
   title={Groups of homologies of nilpotent algebras.},
   language={Russian},
   journal={Dokl. Akad. Nauk SSSR (N.S.)},
   date={1957},
   pages={1066--1069},
   issn={0002-3264},
   review={\MR{0092776}},
}

\bib{MR0647683}{article}{
      author={Lescot, Jack},
       title={La s\'{e}rie de {B}ass d'un produit fibr\'{e} d'anneaux locaux},
        date={1981},
        ISSN={0249-6291},
     journal={C. R. Acad. Sci. Paris S\'{e}r. I Math.},
      volume={293},
      number={12},
       pages={569\ndash 571},
         url={https://doi.org/10.1007/BFb0098933},
      review={\MR{647683}},
}

\bib{MR1011461}{book}{
      author={Matsumura, Hideyuki},
       title={Commutative ring theory},
     edition={Second},
      series={Cambridge Studies in Advanced Mathematics},
   publisher={Cambridge University Press, Cambridge},
        date={1989},
      volume={8},
        ISBN={0-521-36764-6},
        note={Translated from the Japanese by M. Reid},
      review={\MR{1011461}},
}

\bib{MR2488551}{article}{
      author={Moore, W.~Frank},
       title={Cohomology over fiber products of local rings},
        date={2009},
        ISSN={0021-8693,1090-266X},
     journal={J. Algebra},
      volume={321},
      number={3},
       pages={758\ndash 773},
         url={https://doi.org/10.1016/j.jalgebra.2008.10.015},
      review={\MR{2488551}},
}

\bib{MR3691985}{article}{
      author={Nasseh, Saeed},
      author={Sather-Wagstaff, Keri},
       title={Vanishing of {E}xt and {T}or over fiber products},
        date={2017},
        ISSN={0002-9939},
     journal={Proc. Amer. Math. Soc.},
      volume={145},
      number={11},
       pages={4661\ndash 4674},
         url={https://doi.org/10.1090/proc/13633},
        note={[Paging previously given as 1--14]},
      review={\MR{3691985}},
}

\bib{MR3862678}{article}{
      author={Nasseh, Saeed},
      author={Sather-Wagstaff, Keri},
      author={Takahashi, Ryo},
      author={VandeBogert, Keller},
       title={Applications and homological properties of local rings with
  decomposable maximal ideals},
        date={2019},
        ISSN={0022-4049},
     journal={J. Pure Appl. Algebra},
      volume={223},
      number={3},
       pages={1272\ndash 1287},
         url={https://doi.org/10.1016/j.jpaa.2018.06.006},
      review={\MR{3862678}},
}

\bib{MR4064107}{article}{
      author={Nasseh, Saeed},
      author={Takahashi, Ryo},
       title={Local rings with quasi-decomposable maximal ideal},
        date={2020},
        ISSN={0305-0041},
     journal={Math. Proc. Cambridge Philos. Soc.},
      volume={168},
      number={2},
       pages={305\ndash 322},
         url={https://doi.org/10.1017/s0305004118000695},
      review={\MR{4064107}},
}

\bib{MR3988200}{article}{
      author={Nguyen, Hop~D.},
      author={Vu, Thanh},
       title={Homological invariants of powers of fiber products},
        date={2019},
        ISSN={0251-4184,2315-4144},
     journal={Acta Math. Vietnam.},
      volume={44},
      number={3},
       pages={617\ndash 638},
         url={https://doi.org/10.1007/s40306-018-00317-y},
      review={\MR{3988200}},
}

\bib{MR0771818}{article}{
      author={Ogoma, Tetsushi},
       title={Fibre products of {N}oetherian rings and their applications},
        date={1985},
        ISSN={0305-0041,1469-8064},
     journal={Math. Proc. Cambridge Philos. Soc.},
      volume={97},
      number={2},
       pages={231\ndash 241},
         url={https://doi.org/10.1017/S0305004100062794},
      review={\MR{771818}},
}

\bib{MR0951203}{incollection}{
      author={Ogoma, Tetsushi},
       title={Fibre products of {N}oetherian rings},
        date={1987},
   booktitle={Commutative algebra and combinatorics ({K}yoto, 1985)},
      series={Adv. Stud. Pure Math.},
      volume={11},
   publisher={North-Holland, Amsterdam},
       pages={173\ndash 182},
         url={https://doi.org/10.2969/aspm/01110173},
      review={\MR{951203}},
}

\bib{MR3754407}{article}{
      author={Rahmati, Hamidreza},
      author={Striuli, Janet},
      author={Yang, Zheng},
       title={Poincar\'{e} series of fiber products and weak complete
  intersection ideals},
        date={2018},
        ISSN={0021-8693},
     journal={J. Algebra},
      volume={498},
       pages={129\ndash 152},
         url={https://doi.org/10.1016/j.jalgebra.2017.11.024},
      review={\MR{3754407}},
}

\bib{SchnibbenThesis}{misc}{
      author={Schnibben, Thomas},
       title={Local rings and golod homomorphisms},
        date={Doctoral dissertation, 2018},
}

\bib{MR4420538}{article}{
      author={VandeBogert, Keller},
       title={Vanishing of {A}vramov obstructions for products of sequentially
  transverse ideals},
        date={2022},
        ISSN={0022-4049,1873-1376},
     journal={J. Pure Appl. Algebra},
      volume={226},
      number={11},
       pages={Paper No. 107111, 12},
         url={https://doi.org/10.1016/j.jpaa.2022.107111},
      review={\MR{4420538}},
}

\end{biblist}
\end{bibdiv}

\end{document}